\documentclass{article}
\usepackage{authblk}
\usepackage{graphicx}
\usepackage{tabularx}
\usepackage{amsfonts,amsmath,amssymb,amsthm}
\usepackage{bbm}
\usepackage{commath}
\usepackage[inline]{enumitem}   
\def \E {\mathbb{E}}
\def \P {\mathbb{P}}

\def \i {\mathbbm{1}}
\usepackage{mathtools}

\usepackage{setspace}
\usepackage{caption}
\usepackage{subcaption}
\usepackage{marvosym}
\usepackage{natbib}

\usepackage{graphicx}
\graphicspath{{images/}}

\usepackage{notes}

\usepackage{psfrag}
\usepackage{cases}
\usepackage{multirow}
\usepackage{booktabs}

\usepackage[all]{xy}

\input xy
\xyoption{all}

\usepackage{tikz}
\usetikzlibrary{automata,arrows,positioning,calc,decorations.pathreplacing,shapes}

\newcommand*{\Exp}{\ensuremath{\mathrm{Exp}}}

\newcommand{\mockalph}[1]{}
\usepackage{todonotes}
\newtheorem{theorem}{Theorem}

\newtheorem{proposition}{Proposition}

\usepackage{geometry}
\allowdisplaybreaks
\geometry{
	a4paper,
	left=2.2cm,right=2.2cm,top=3.5cm,bottom=3.5cm
}
\onehalfspacing
\title{Condition based maintenance policies under imperfect maintenance at scheduled and unscheduled opportunities}
\author{C. Drent}
\author{S. Kapodistria\thanks{{Corresponding author. \emph{Email address}: s.kapodistria@tue.nl.}} }
\author{J. A. C. Resing}
\affil{{Eindhoven University of Technology, Department of Mathematics and Computer Science \\P.O. Box 513, 5600 MB Eindhoven, The Netherlands}}
\date{\today}
\begin{document}
\maketitle
\begin{abstract}
Motivated by the cost savings that can be obtained by sharing resources in a network context, we consider a stylized, yet representative, model for the coordination of maintenance and service logistics for a geographic network of assets. Capital assets, such as wind turbines in a wind park, require maintenance throughout their long lifetimes. Two types of preventive maintenance are considered: planned maintenance at periodic, scheduled opportunities, and opportunistic maintenance at unscheduled opportunities. The latter type of maintenance arises due to the network context: when an asset in the network fails, this constitutes an opportunity for preventive maintenance for the other assets in the network. 

So as to increase the realism of the model at hand and its applicability to various sectors, we consider the option of not-deferring   and of deferring  planned maintenance after the occurrence of opportunistic maintenance. We also assume that preventive maintenance may not always restore the condition of the system to `as good as new'.  By formulating this problem as a semi-Markov decision process, we characterize the optimal policy as a control limit policy (depending on the remaining time until the next planned maintenance) that indicates on the one hand when it is optimal to perform preventive maintenance and on the other hand when maintenance resources should be shared if an opportunity in the network arises. In order to facilitate managerial insights on the effect of each parameter on the cost, we provide a closed-form expression for the long-run rate of cost for any given control limit policy (depending on the remaining time until the next planned maintenance) and compare the costs (under the optimal policy) to those of sub-optimal policies that neglect the opportunity for resource sharing. We illustrate our findings using data from the wind energy industry.
\end{abstract}
{\em Keywords:} Markov decision processes, condition based maintenance, opportunistic maintenance. \\
{\em 2010 MSC:} 90B25, 90C40, 60K15, 60J20

\section{Introduction}
\label{sec:intro}
High valued capital assets, such as energy systems (e.g., wind turbines), medical systems (e.g., interventional X-ray machines), lithography machines in semiconductor fabrication plants, and baggage handling systems at airports require maintenance throughout their (long) lifetimes. Such capital assets are crucial to the primary processes of their users/operators and unexpected failures may have very significant negative impacts and even life threatening consequences.  In order to avoid or to minimize failures, asset owners perform preventive maintenance activities, with the objective to retain or to restore a system back to a satisfactory operating condition. The costs of both these maintenance activities, and of their respective unscheduled downtimes, represent one of the key drivers of an organization's total costs. Such maintenance costs constitute up to 70\% of the total value of the end product \citep{bevilacqua2000analytic,mobley2002introduction}, and this percentage is rapidly increasing  \citep{zio2013evaluating}. Hence, there is great incentive for asset owners to optimize the maintenance planning. 

The most common maintenance practices are the so-called {\em corrective maintenance} and the {\em planned maintenance}. The former as the name suggests proposes the repair of the asset upon failure, while the latter proposes a fixed service schedule for the field service engineers with the objective of ensuring that the asset operates correctly and of avoiding any unscheduled breakdown and downtime. The cost of planned maintenance is relatively low in comparison to that of corrective maintenance, 
due to its planned, anticipated nature. Planned maintenance is characterized by its scheduled downtimes (contrary to the unscheduled downtime experienced at a failure, which leads to a corrective maintenance) with fixed inter-scheduled instances, say at instances $\tau,2\tau,3\tau,\ldots$, (e.g., $\tau$ = 6 months). Such instances constitute the {\em scheduled opportunities} of preventive maintenance.

In the context of a network of assets, such as a wind park or a network of hospitals in close geographic proximity (from the viewpoint of the service provider), there is a second type (in addition to the above scheduled instances) of opportunities to perform preventive maintenance. In the event that a failure occurs, its corrective maintenance instance can be viewed as an unscheduled opportunity for preventive maintenance for the other assets in the network. In these instances, {\em opportunistic maintenance} can take place, with the respective instances constituting the  {\em unscheduled opportunities} of preventive maintenance. This form of network dependency can be viewed on two levels: (i) the economic dependency between the various systems of a network, and (ii) the structural degradation and failure dependencies. Similarly to planned maintenance, opportunistic maintenance has a lower cost in comparison to that of corrective maintenance. 

Incorporating opportunistic maintenance may also affect the scheduling of planned maintenance, as it might be beneficial to defer the planned maintenance opportunity to take place after a period of length $\tau$ after the occurrence of an opportunistic maintenance. This decision of {\em deferring or not the scheduling of planned maintenance} after the occurrence of opportunistic maintenance may have a positive or negative effect on the total costs.

In maintenance, it is oftentimes assumed that a maintenance activity is perfect, i.e. it restores the system to a state of `as good as new'. However, this assumption may not be true in practice. For instance, a misidentification of the root cause of the (imminent) failure can lead to an erroneous repair not resolving the actual issue, or some minor repair activity (such as exchange of parts, changes or adjustment of the settings, software update, lubrication or cleaning, etc. see \citep{spinato2009reliability}) may not restore the system to a state of `as good as new'. In the above mentioned cases, it is more reasonable to assume that the system is restored to a state between `as bad as old' and `as good as new'. This concept will be referred to as {\em imperfect maintenance}. Evidently, this assumption impacts the resulting cost. Hence, knowledge regarding the degree of how successful a maintenance activity is should not be ignored in the maintenance planning. 

In conclusion, asset owners are oftentimes faced with the following questions: 
\begin{enumerate}[label=(\roman*)]
\item What is the advantage of incorporating planned maintenance in comparison to exercising only corrective maintenance?
\item What is the benefit of sharing resources in the network (in the form of incorporating opportunistic maintenance in addition to the planned maintenance)?
\item What is the influence of deferring the planned maintenance after the occurrence of opportunistic maintenance? 
\item What is the influence of imperfect maintenance on the maintenance planning and on the costs (long-run rate of cost)? 
\item When should preventive maintenance be performed (so as to minimize the long-run rate of cost)?
\end{enumerate}

\paragraph{Main contributions}\mbox{}\\
We consider a stylized, yet representative model that incorporates the above-mentioned characteristics and we prove the existence of the optimal maintenance policy and we derive its structure. Furthermore, we compute an explicit expression for the long-run rate of cost, which can be easily used by asset owners and service providers so as to gain further insights into their practice and so as to compute the cost-benefits of changing their maintenance practice. More concretely, the main contributions of the paper are threefold: 
\begin{enumerate*}[label=(\arabic*)]
\item We consider a semi-Markov decision process that incorporates planned and opportunistic maintenance, as well as imperfect maintenance. From the analysis of the semi-Markov decision process stems the characterization of the optimal policy as a control limit policy (threshold) depending on the time until the next planned maintenance opportunity. Moreover, using this approach, we are able to derive a closed-form expression for this control limit. 
\item Considering the class of control limit  policies (depending on the remaining time until the next planned maintenance), we derive, using the theory of regenerative processes, an explicit expression for the long-run rate of cost. 
\item We consider data from the wind energy industry and provide, based on these values, concrete answers to Questions (i)--(v) mentioned above. More specifically, we analyze the benefit of using planned and opportunistic maintenance compared to only corrective maintenance. We also analyze the influence of deferring planned maintenance after the occurrence of opportunistic maintenance. Finally, we also highlight the cost savings that can be attained by reducing the probability of an imperfect maintenance. 
\end{enumerate*}

\paragraph{Outline of this paper}\mbox{}\\
The remainder of this paper is structured as follows: In Section \ref{sec:review}, we review the related literature.  In Section \ref{sec:model}, we describe in detail the model at hand, which captures the condition of the asset  and which incorporates imperfect maintenance at scheduled and unscheduled maintenance opportunities. Subsequently, in Section \ref{sec:optpolicy}, we characterize the structure of the optimal policy for condition based maintenance using the average cost criterion, see Section \ref{Sec:AverageCostCriterion}, and we compute the long-run rate of cost for any policy with the same structure as the optimal policy (i.e. the class of control limit policies depending on the remaining time until the next planned maintenance), see Section \ref{subsec:longrunaverage}. In Section \ref{sec:rescheduling}, we permit the deferral of planned maintenance after the occurrence of opportunistic maintenance, and we compute the long-run rate of cost. A numerical illustration is provided in Section \ref{sec:numerical}, where, based on data from the wind energy industry, we compare the long-run rate of cost for various policies, we show the effect of imperfect maintenance, and the effect of deferring planned maintenance. Finally, Section \ref{sec:conclusion} contains concluding remarks and highlights directions for future research. 

\section{Literature review}
\label{sec:review}
Maintenance optimization models have been extensively studied in the literature. Optimal maintenance policies aim to provide optimal system reliability/availability and safety performance at lowest possible maintenance costs \citep{pham1996imperfect}. Due to the fast development of sensing techniques in recent years, the state of a capital asset can be monitored or inspected at a much lower cost and in a continuous fashion, which facilitates condition based maintenance. Condition based maintenance  recommends maintenance actions based on information collected through online monitoring of the capital asset and it can significantly reduce maintenance costs by decreasing the number of unnecessary maintenance operations, see e.g., \cite{jardine2006,peng2010current,lam2015myopic}. The condition based maintenance model that we propose builds on the delay time model  proposed by \cite{christer1982modelling} and \cite{christer1984delay}.  We refer the reader to \cite{baker1994review, christer1999developments}, and \cite{wang2008delay}, and more recently, \cite{wang2012overview} for an overview on delay time models. Not only are delay time models well-known in literature, but they are also very frequently appearing in practice. 
%

Practice-based research with real diagnostic data, such as data related to the spectrometry of oil \cite[e.g.,][]{makis2006application,kim2011optimal} and data related to vibrations \citep[e.g.,][]{yang2010arx}, showed that it is usually sufficient, and even preferable from a modeling and decision making perspective, to consider only two operational states. The first state is the perfect state, in which the system lasts from newly installed to the point that a hidden defect has been identified. After the occurrence of a hidden defect in the system until the occurrence of a failure (which is typically referred to as the delay time), the system resides in the second state also referred to as the satisfactory state. Such a classification of the operational states has the property that maintenance actions are initiated only when the system is degraded to the state that can actually lead to a direct failure, i.e. the satisfactory state, but not when the system is functioning perfectly, i.e. the perfect state. The vast majority of the literature on delay time models is restricted to numerical methods or approximations to solve the models at hand, due to their underlying complexity. Few recent exceptions are \cite{maillart2002cost}, \cite{kim2013joint} and  \cite{van2014optimal}, who study two-state systems under periodic inspection, partial observability, and postponed replacement, respectively, and provide analytical results regarding the structure of the optimal policy. However, all of them do not consider the option of resource sharing in the network (in the form of opportunistic maintenance), nor do they incorporate the notion of imperfect repair. 

Most delay time model analyses assume that the system after a maintenance action is restored to a state of `as good as new'. Contrary to this assumption, in imperfect maintenance it is assumed that upon preventive maintenance, the system lies in a state somewhere between `as good as new' and `as bad as old'. This is first introduced by \cite{nakagawa1979optimum,nakagawa1979imperfect} and is called the $(p,q)$-rule. Under the $(p,q)$-rule,  the system is returned to an  `as good as new' state (perfect preventive maintenance) with probability $p$ and it is returned to the `as bad as old' state (minimal preventive maintenance) with probability $q = 1 - p$ after preventive maintenance. Clearly, the case $p = 0$ corresponds to having no preventive maintenance. Also, from a practical point of view, imperfect maintenance can describe a large set of realistic maintenance actions \citep{pham1996imperfect}.

When planning condition based maintenance strategies, see, e.g., \cite{jardine2006,jardine2005maintenance,prajapati2012condition}, a typical assumption in the literature is that the system at hand is monitored continuously and one can intervene and maintain the  system at any given moment. However, due to accessibility reasons (e.g., in the case of off-shore wind parks) or for cost reduction purposes, it is cost optimal and more practical to allow only for discrete time opportunities. The simplest amongst the discrete time opportunities are the periodic planned maintenance instances (also referred to as scheduled downs), with period say $\tau$, that serve as a scheduled opportunity to do maintenance for a network of systems. Furthermore, unplanned maintenance instances (due to opportunistic maintenance) can be modeled as discrete instances occurring according to a multi-dimensional counting process. 

For recent works related to opportunistic maintenance, the interested reader is referred to \cite{zhu2016age,zhu2017condition,arts2018design,Kalosi}.  In \cite{zhu2016age} and \cite{zhu2017condition}, the authors consider a single-unit system and account for both scheduled and unscheduled opportunities. In these analyses, the authors model the age and the condition, respectively, of the system and derive, based on approximations, the long-run rate of cost under a given policy. In both papers, the arrivals of unscheduled opportunities are modeled according to a homogeneous Poisson process. This approximation is justified by the Palm-Khintchine theorem  \citep{khinchin1956sequences}, which states that even if the failure times of some systems do not follow exponential distributions, the superposition of a sufficiently large number of independent renewal processes  behaves asymptotically like a Poisson process.  \cite{arts2018design} build further on \cite{zhu2016age,zhu2017condition}, but they only consider scheduled maintenance opportunities (excluding unscheduled opportunities). Furthermore, \cite{arts2018design} assume that at a scheduled opportunity, the system is restored to a perfect condition (i.e. $p=1$), while at a failure they assume that the system is restored to a state which is stochastically identical to the state just prior to the system's failure. In a recent conference paper, \cite{Kalosi} looked at a model with both planned and unplanned maintenance opportunities, at which the system is restored to a perfect condition, showing some preliminary results that a control limit  policy (depending on the remaining time until the next planned maintenance)  is optimal. 

In contrast to  \cite{arts2018design} and to \cite{zhu2016age,zhu2017condition}, in which the long-run rate of cost is computed for a given policy, we first characterize the structure of the optimal policy explicitly and thereafter, for the optimal policy class, we compute the long-run rate of cost. Furthermore, we include both scheduled and unscheduled maintenance opportunities. In contrast to \cite{Kalosi}, we extend the model by incorporating the $(p,q)$-rule, making it more generic and realistic. Moreover, we are the first to analyze the influence of deferring planned maintenance and we illustrate the financial effects of the maintenance policy in a realistic context using data stemming from the wind industry.

\section{Model description}
\label{sec:model}
We consider a single unit system (equivalently, a component or asset) that is monitored continuously and whose condition is fully observable. We assume that the condition of the system degrades over time and that it can be modeled according to a delay time model. That is, the states are classified as {\em perfect}, {\em satisfactory} and {\em failed}. We shall refer to the state of perfect condition as state $2$, the state of satisfactory condition as state $1$ and the failure state as state $0$. Furthermore, we assume that as soon as a system failure occurs, the system is instantaneously replaced by an `as good as new' system. So, in the mathematical formulation of the model, we may assume, due to the instantaneous replacement at failure, that the model evolves between only  states $1$ and $2$. The system spends an exponential amount of time with rate $\mu_i$ in state $i$, $i\in\{1,2\}$. The above model formulation implies that initially the system starts in state $2$ (perfect state), then after an exponential amount of time with rate $\mu_2$, the system deteriorates and the condition of the system goes to state $1$ (satisfactory state). The system spends an exponential amount of time with rate $\mu_1$ in state $1$, after which a failure occurs. At a failure the system is instantaneously replaced by an `as good as new' system and the condition is restored to $2$ (perfect state).  A schematic evolution of the condition of the component and the corresponding times of transitions are depicted in Figure \ref{Fig:1}.

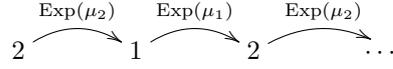
\begin{figure}[h]
\[\def\labelstyle{\scriptstyle}
 \xymatrix  @C+9pt@R+1pt{
&
2  \ar@/^.75pc/[r]^{\mathrm{Exp}(\mu_2)}&1\ar@/^.75pc/[r]^{\mathrm{Exp}(\mu_1)}&2  \ar@/^.75pc/[r]^{\mathrm{Exp}(\mu_2)}&\cdots\ \ \ \\}
\]
\caption{Schematic evolution of the condition of the component and the corresponding times of transitions.\label{Fig:1}}
\end{figure}

We assume that we have two types of opportunities in which we can perform preventive maintenance (PM) before failure: the  scheduled and the unscheduled opportunities. The scheduled opportunities correspond to pre-arranged opportunities occurring according to a fixed schedule. These opportunities can be attributed to either  service/maintenance agreements or to regulation imposition checks. We assume that the scheduled opportunities occur at epochs $\tau,2\tau,3\tau,\ldots$, with $\tau>0$. This is also in accordance with what happens in practice as maintenance actions once planned are typically not rescheduled. The unscheduled opportunities correspond to random opportunities triggered by failures of other systems in close proximity. We assume that these unscheduled opportunities occur according to a Poisson process at rate $\lambda$.

The unscheduled and scheduled opportunities, abbreviated by USO and SO, respectively, serve as opportunities to perform preventive maintenance. Such a preventive maintenance is assumed to cost less than a corrective maintenance (CM) upon failure, which costs $c_{\text{cm}}$. Moreover, incorporating a planning perspective, we may assume that the preventive maintenance cost at an SO, $c_{\text{pm}}^{\text{so}}$, is less than or equal to the corresponding cost at a USO, say $c_{\text{pm}}^{\text{uso}}$, that is $0<c_{\text{pm}}^{\text{so}}\le c_{\text{pm}}^{\text{uso}}<c_{\text{cm}}$ (however, we also extend our analysis to the case $c_{\text{pm}}^{\text{so}}> c_{\text{pm}}^{\text{uso}}$). Following the $(p,q)$-rule of \cite{nakagawa1979imperfect,nakagawa1979optimum}, we assume that  after preventive maintenance a system is returned to the `as good as new' state with probability $p\in (0,1]$ and returned to the `as bad as old' state (i.e. the amount of time left until the failure has not altered) with probability $q=1-p$.

Our aim is to determine a policy when to perform preventive maintenance on the system based on its condition and the opportunity type, i.e.  scheduled or unscheduled. More explicitly, we will need to formally define the state space, which refers to the condition of the system, the action space and the decision epochs. The state space is governed by the process depicting the condition of the system, i.e.  the Markov chain evolving between the states $\{1,2\}$. The action space consists of only two actions: perform preventive maintenance  or do nothing. Lastly, the decision epochs are the SO and USO epochs. In Figure \ref{Fig:2}, we depict the SO epochs by ($*$) and the USO epochs by (o).

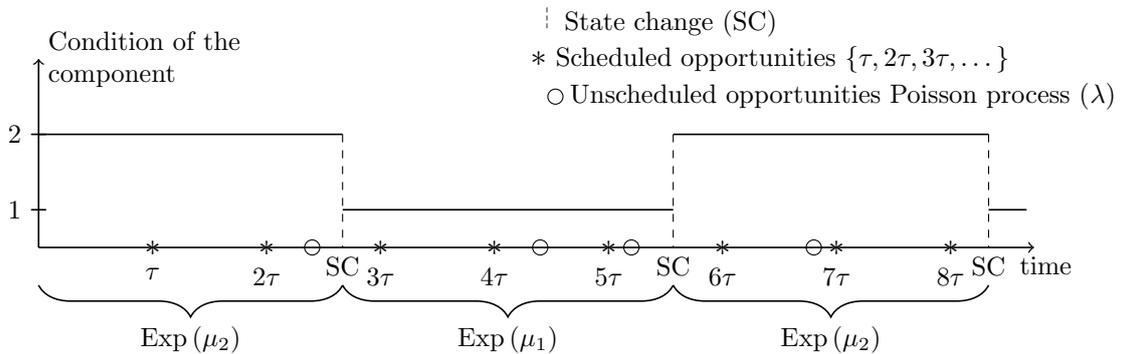
\begin{figure}[ht!]
\begin{center}
		\begin{tikzpicture}
		\draw [semithick,->] (-6,0) -- (-6,2.5) node[right,text width=2.7cm]{Condition of the component};
		\draw [semithick] (-6.1 ,1.5) node[left]{2} -- (-5.9,1.5);
		\draw [semithick] (-6.1 ,.5) node[left]{1} -- (-5.9,.5);
		\draw [semithick,->] (-6,0) -- (7.1,0) node[below,xshift=1ex]{time};
		\draw [semithick] (-6,1.5) -- (-2,1.5);
		\draw  (-4.5,0) node[yshift=-.2pt]{\large $\ast$} node[below,yshift=-1ex]{ $\tau$};
		\draw  (-3,0) node[yshift=-.2pt]{\large $\ast$} node[below,yshift=-1ex]{ $2\tau$};
		\draw  (-2.4,0) circle (0.1);				
		\draw [semithick] (-2,.5) -- (2.35,.5);				
		\draw [dashed] (-2,1.5) -- (-2,0) node[below]{SC};								
		\draw  (-1.5,0) node[yshift=-.2pt]{\large $\ast$} node[below,yshift=-1ex]{ $3\tau$};
		\draw  (0,0) node[yshift=-.2pt]{\large $\ast$} node[below,yshift=-1ex]{ $4\tau$};
		\draw  (.6,0) circle (0.1);					
		\draw  (1.5,0) node[yshift=-.2pt]{\large $\ast$} node[below,yshift=-1ex]{ $5\tau$};				
		\draw  (1.8,0) circle (0.1);					
		\draw [dashed] (2.35,1.5) -- (2.35,0) node[below]{SC};				
		\draw [semithick] (2.35,1.5) -- (6.5,1.5);						
		\draw  (3,0) node[yshift=-.2pt]{\large $\ast$} node[below,yshift=-1ex]{ $6\tau$};		
		\draw  (4.2,0) circle (0.1);							
		\draw  (4.5,0) node[yshift=-.2pt]{\large $\ast$} node[below,yshift=-1ex]{ $7\tau$};				
		\draw  (6,0) node[yshift=-.2pt]{\large $\ast$} node[below,yshift=-1ex]{ $8\tau$};				
		\draw [dashed] (6.5,1.5) -- (6.5,0) node[below]{SC};	
		\draw [semithick] (6.5,.5) -- (7,.5);
		
		\draw[semithick,decorate,decoration={brace,amplitude=12pt,mirror}] 
		(-6,-.5) -- (-2,-.5)  node [midway,below,yshift=-12pt]  {$\Exp\del{\mu_2}$};
		\draw[semithick,decorate,decoration={brace,amplitude=12pt,mirror}] 
		(-2,-.5) -- (2.35,-.5)  node [midway,below,yshift=-12pt]  {$\Exp\del{\mu_1}$};				
		\draw[semithick,decorate,decoration={brace,amplitude=12pt,mirror}] 
		(2.35,-.5) -- (6.5,-.5)  node [midway,below,yshift=-12pt]  {$\Exp\del{\mu_2}$};
		\node at (3.625,2.5) {{\large{$\ast$}} Scheduled opportunities $\cbr{\tau,2\tau,3\tau,\dots}$};
		\draw  (.8,2) circle (0.1) node[right,xshift=.45ex]{Unscheduled opportunities Poisson process $(\lambda)$};					
		\node at (2.2,3) {{\large{\rotatebox{90}{\Kutline}}} State change (SC)};
		\end{tikzpicture}
\end{center}
\caption{A sample path of the model.\label{Fig:2}}
\end{figure}

Table \ref{tab:abbrev} summarizes the abbreviations that we will use throughout the remainder of this paper. 
\begin{table}[!htb]
\centering
\begin{tabular}{c l} 
\toprule
PM & Preventive maintenance\\
CM & Corrective maintenance\\
USO & Unscheduled opportunity\\
SO & Scheduled opportunity\\
SC & State change \\
\bottomrule
\end{tabular}
\caption{Overview of abbreviations.}
\label{tab:abbrev}
\end{table}


\section{Optimal policy}
\label{sec:optpolicy}
The goal of this section is twofold: We first characterize the structure of the optimal average cost condition based maintenance policy. We then derive an explicit form for the long-run rate of cost per time unit for any given policy that has the same structure as the optimal policy.

\subsection{Average cost criterion}
\label{Sec:AverageCostCriterion}
This section is devoted to the derivation of the optimal policy on when to perform preventive maintenance for the system at hand using the average cost criterion. To this purpose, we set up our problem as a (controlled) semi-Markov decision process. Due to the stochastic nature of the problem, it does not suffice to know the type of the decision epoch (SO or USO), but it is also required to keep track of the remaining time till the next SO. That time may impact our decision, i.e.  the optimal policy may depend on the residual time till the next SO. Thus, for the full description of the condition (state) of the system, we use a triplet descriptor 
$$\mathcal{S}=\left\{(i,j,t):\ i\in \{1,2\}, \ j\in\{\text{SC},\text{USO}\},\ t\in(0,\tau)\right\}\cup \left\{(i,\text{SO},0):\ i\in \{1,2\}\right\},$$
where $i$ indicates the condition of the system. If $j=\text{SC}$, then this means that the condition of the system is about to change and there is no decision associated with this epoch, while if $j=\text{SO}$ or $j=\text{USO}$, this means that this is a decision moment at either a scheduled (SO) or unscheduled opportunity (USO), respectively. Finally, the third element indicates the remaining time until the SO. Note that if  $j=\text{SO}$ then $t=0$. The introduction of the remaining time until the upcoming SO in the full description of the condition of the system renders the model inhomogeneous, and for this reason we  use techniques that stem from semi-Markov decision processes. Note here that the inclusion of the remaining time until the upcoming SO in the state, although it complicates the analysis, permits us to prove that there is an optimal policy in the class of deterministic stationary policies, cf. Propositions \ref{prop:ACOI} and \ref{prop:ACOI-full}. At each decision epoch (depending on the values of $(i,j,t)\in\mathcal{S}$), we can choose to perform preventive maintenance or do nothing or in case of a failure to do corrective maintenance (CM), that is $\mathcal{A} =\{\text{perform PM, do nothing, perform CM}\}$, where $\mathcal{A}$ represents the overall action space. 

\begin{proposition}\label{prop:ACOI}
For the model at hand, the deterministic stationary policy is optimal for the average cost criterion.
\end{proposition}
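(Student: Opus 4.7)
The plan is to verify existence of a deterministic stationary optimal policy via the standard vanishing discount approach adapted to the semi-Markov decision process (SMDP) framework, exploiting the strong regenerative structure of the model (every $\tau$ time units a scheduled opportunity is reached). Concretely, I would first formulate, for each discount factor $\alpha>0$, the $\alpha$-discounted cost optimality equation on $\mathcal{S}$. Because the per-decision costs $c_{\text{pm}}^{\text{so}},c_{\text{pm}}^{\text{uso}},c_{\text{cm}}$ are bounded, the action space $\mathcal{A}$ is finite, and the sojourn times between decision epochs have uniformly bounded expectation (bounded above by $\tau$ since a scheduled opportunity always occurs within that window), a contraction-style argument in an appropriately weighted sup-norm yields a unique bounded measurable solution $V_\alpha$ and the existence of a measurable $\alpha$-discount optimal stationary selector $f_\alpha^{*}$.

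Next I would carry out the vanishing-discount limit. Fix a reference state $s_0\in\mathcal{S}$ (for instance $s_0=(2,\text{SO},0)$) and set $h_\alpha(s)=V_\alpha(s)-V_\alpha(s_0)$. The core step is to establish that $\{h_\alpha\}_{\alpha>0}$ is equi-bounded in $s$. For this I would use the regenerative structure: from any state $(i,j,t)\in\mathcal{S}$, after a geometrically bounded number of scheduled-opportunity epochs (and hence after an expected time bounded in terms of $\tau$, $\mu_1$, $\mu_2$, $\lambda$, and $p$), the process hits $s_0$ under some stationary policy (e.g.\ ``always perform PM''). Coupling this with the bound $g_\alpha:=\alpha V_\alpha(s_0)\le \max\{c_{\text{cm}},c_{\text{pm}}^{\text{uso}}\}/\min(\text{sojourn})$, Tauberian-type reasoning gives uniform bounds $|h_\alpha(s)|\le M(s)$ with $M$ bounded on $\mathcal{S}$. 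Extracting a subsequence $\alpha_n\downarrow 0$ and passing to the pointwise limit (using the finiteness of $\mathcal{A}$ and continuity in $t$ of the transition kernel induced by the exponential and Poisson clocks), one obtains a bounded pair $(g,h)$ satisfying the average cost optimality inequality
\begin{equation*}
g\cdot \tau(s,a) + h(s) \le \min_{a\in\mathcal{A}(s)}\Bigl\{ c(s,a) + \E[h(S') \mid s,a]\Bigr\},\qquad s\in\mathcal{S}.
\end{equation*}

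Finally, invoking the standard verification theorem for SMDPs (e.g.\ the one in Ross, or the ACOI-based theorem in Puterman's SMDP chapter), the measurable selector $f^{*}(s)\in\arg\min_{a}\{c(s,a)+\E[h(S')\mid s,a]\}$ exists (arg-min over a finite set) and is a deterministic stationary policy whose long-run average cost equals $g$ and matches the optimal value over all history-dependent randomized policies. This yields Proposition~\ref{prop:ACOI}.

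I expect the technical bottleneck to be the equi-boundedness step. The challenge is that the state has a continuous component $t\in(0,\tau)$, so one cannot directly invoke finite-state results of Bertsekas--Platzman type; instead, one must exploit that the distribution of the next decision epoch depends continuously on $t$ through the competing exponential/Poisson clocks, together with the deterministic hard reset every $\tau$ units provided by the scheduled opportunities. Once this regenerative bound is in place, the remaining arguments are standard and the measurability of the arg-min follows because $|\mathcal{A}|\le 3$.
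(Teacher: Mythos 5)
Your proposal follows essentially the same route as the paper's proof (Proposition \ref{prop:ACOI-full}, Appendix \ref{sec:appendixmdp}): a vanishing-discount argument in which the discounted Bellman equation is solved first, equi-boundedness of $V_\alpha(x)-V_\alpha(z)$ is extracted from the deterministic reset to a scheduled-opportunity state within time at most $\tau$ (the paper bounds this via first-passage times to $(1,\text{SO},0)$), and a verification step then delivers a deterministic stationary optimal policy. The one slip is your bound $\alpha V_\alpha(s_0)\le \max\{c_{\text{cm}},c_{\text{pm}}^{\text{uso}}\}/\min(\text{sojourn})$, which is vacuous since sojourn times can be arbitrarily short; the paper instead bounds $\alpha V_\alpha$ by comparison with the do-nothing policy, yielding $g\le c_{\text{cm}}\,\mu_1\mu_2/(\mu_1+\mu_2)$.
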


A formal version of the above proposition, cf. Proposition \ref{prop:ACOI-full}, and its proof can be found in Appendix \ref{sec:appendixmdp}, together with a full formal definition of the model in the context of semi-Markov decision processes. In addition to the theoretical validation that the above proposition offers on the existence and nature of the optimal maintenance policy, in the following theorem we compute the optimal policy.

\begin{theorem}\label{thm:opt1}
Under the assumption that $c_{\text{pm}}^{\text{so}}< c_{\text{pm}}^{\text{uso}}$ and given the imperfect preventive maintenance probability $1-p\in(0,1]$,   the optimal policy under the average cost criterion is: For state $2$ to do nothing. For state $1$ to perform preventive maintenance at scheduled opportunities, if $ \mu_1 c_{\text{cm}} > (\mu_1+\mu_2)\frac{c_{\text{pm}}^{\text{so}}}{p}$, and to do nothing otherwise, and to perform preventive maintenance at unscheduled opportunities for which the residual time until the next scheduled opportunity is in $[\hat{t},\tau)$, if $\mu_1 c_{\text{cm}} > \left( \frac{c_{\text{pm}}^{\text{uso}}}{p} - \frac{c_{\text{pm}}^{\text{uso}} - c_{\text{pm}}^{\text{so}}}{e^{(\mu_1 +\mu_2)\tau}-1} \right) (\mu_1+\mu_2)$, and to do nothing otherwise. Where, $\hat{t} = \min\{\tau,\max\{0,t^*\}\}$, with $t^*$ satisfying
\begin{align}
\frac{c_{\text{pm}}^{\text{uso}}}{p} &= \frac{\mu_1c_{\text{cm}} + \lambda c_{\text{pm}}^{\text{uso} }}{\mu_1+\mu_2 + \lambda p } + \left(
\frac{-c_{\text{pm}}^{\text{so}}+\frac{\mu_1c_{\text{cm}}}{\mu_1+\mu_2} + \left( \frac{c_{\text{pm}}^{\text{uso}}}{p} - \frac{\mu_1 c_{\text{cm}}}{\mu_1 + \mu_2} \right) e^{(\mu_1+\mu_2)t^*}}{1-p}
 -\frac{\mu_1c_{\text{cm}} + \lambda c_{\text{pm}}^{\text{uso} }}{\mu_1+\mu_2 + \lambda p } \right) e^{(\mu_1+\mu_2 + \lambda p )(\tau-t^*)}. \label{thm1eq1}
\end{align}
\end{theorem}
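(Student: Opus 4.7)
The plan is to exploit the average cost optimality equation (ACOE) that Proposition \ref{prop:ACOI} guarantees. I introduce relative value functions $h(i,j,t)$ for $(i,j,t)\in\mathcal{S}$, normalize (say $h(2,\text{SO},0)=0$), and between decision epochs propagate the ACOE by conditioning on the first event among: failure (rate $\mu_1$ in state $1$), state change (rate $\mu_2$ in state $2$), USO arrival (rate $\lambda$), and the deterministic SO at residual time $0$. Failures and successful PMs send the process to state $2$, which is regenerative at an SO, so the ACOE ultimately couples only $h(1,\text{SO},0)$, $h(1,\text{USO},t)$ and the long-run rate of cost $g$.

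State $2$ is dispatched first by a direct domination argument: PM always costs a positive amount, yet leaves the distribution of the post-action state unchanged — both outcomes, ``as good as new'' and ``as bad as old'', coincide with state $2$ by memorylessness — so ``do nothing'' strictly dominates PM at any opportunity when the current condition is $2$. Hence the optimal action at $(2,\text{SO},0)$ and $(2,\text{USO},t)$ is always ``do nothing'', and the ACOE in those states simply propagates the relative value forward.

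For state $1$ at an SO, I would substitute the two actions into the ACOE — ``perform PM'' (cost $c_{\text{pm}}^{\text{so}}$, succeeding with probability $p$ to regenerate at state $2$ and failing with probability $q=1-p$ to leave the system in state $1$) versus ``do nothing'' (continuing toward failure, the next USO, or the next SO) — and simplify. After eliminating $g$ using the regenerative structure at $(2,\text{SO},0)$, the comparison collapses to an inequality of the form $\mu_1 c_{\text{cm}}-(\mu_1+\mu_2)c_{\text{pm}}^{\text{so}}/p>0$, producing the first threshold in the theorem. Intuitively, the factor $1/p$ reflects that effectively $1/p$ imperfect attempts are needed before a successful renewal.

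For state $1$ at a USO at residual time $t\in(0,\tau)$, I would derive a first-order linear ODE in $t$ for the ``do nothing'' relative value by conditioning on the next event; the boundary value at $t=0$ is $h(1,\text{SO},0)$, already determined above. Solving the ODE in closed form and equating it to the PM value produces precisely equation (\ref{thm1eq1}) at the indifference point $t^*$. A monotonicity argument on the ``do nothing'' value in $t$ (a larger residual time means more exposure to a costly failure before reaching the cheap SO) gives uniqueness of $t^*$ and the control-limit structure $\hat t=\min\{\tau,\max\{0,t^*\}\}$; the corner cases $t^*\ge\tau$ and $t^*\le 0$ collapse (\ref{thm1eq1}) to the always-/never-PM threshold condition stated in the theorem. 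The main obstacle is precisely this step: the ODE is self-referential through the $(p,q)$-rule at any intermediate USO (with probability $q$ a PM leaves the process again in $(1,\text{USO},t)$), and the piecewise structure of the optimal policy on $[0,t^*)$ versus $[t^*,\tau)$ must be handled simultaneously when writing and solving the ODE; once the closed-form solution is in hand, verifying monotonicity and substituting into the full ACOE to confirm global optimality is routine.
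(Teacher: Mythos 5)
Your proposal is correct and follows essentially the same route as the paper's proof: the paper writes down the average cost optimality equations, dispatches state $2$ by the same domination argument, and then works with the difference $F_1(t)-F_2(t)$ of the ``do nothing'' continuation values, deriving piecewise linear ODEs in the residual time $t$ on the regions where the USO action is constant, and pinning down the constants via the periodic boundary condition at the SO (which encodes the $(p,q)$-rule) and continuity at the indifference point $t^*$ — exactly the ODE-plus-boundary-condition scheme and corner-case collapse you describe. The only presentational difference is that the paper organizes the argument as four exhaustive cases on the value of $F_1(\tau)-F_2(\tau)$ relative to $c_{\text{pm}}^{\text{so}}/p$ and $c_{\text{pm}}^{\text{uso}}/p$, establishing the monotonicity you invoke by a contradiction with the SO boundary condition rather than by a direct monotonicity lemma.
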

\noindent \begin{proof}
See Appendices \ref{sec:BellmanEq} and \ref{proofTheorem1}. \end{proof}

For USOs, Theorem \ref{thm:opt1} establishes a control limit policy depending on the remaining time until the next SO: if the residual time until the next SO is smaller than $\hat{t}$, then it is optimal to not take the opportunity to perform preventive maintenance in state 1. This is intuitive in the sense that the urgency for preventive maintenance in state 1 at a USO should decrease as the cheaper opportunity at an SO is approaching. 

Note that in the special case when preventive maintenance costs at SOs and USOs are equal, the optimal policy reduces to a stationary control limit policy, which is shown in Proposition \ref{prop:equal}.
\begin{proposition}\label{prop:equal}
Under the assumption that $c_{\text{pm}}^{\text{so}}=c_{\text{pm}}^{\text{uso}}=c_{\text{pm}}>0$ and given the imperfect preventive maintenance probability $1-p\in(0,1]$,  the optimal policy under the average cost criterion is: For state $2$ to do nothing. For state $1$ to perform preventive maintenance at both SOs and USOs, if $ \mu_1  c_{\text{cm}}> (\mu_1+\mu_2) \frac{c_{\text{pm}}}{p}$, and to do nothing otherwise.
\end{proposition}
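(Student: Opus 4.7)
The plan is to deduce Proposition \ref{prop:equal} from Theorem \ref{thm:opt1} by a degeneracy argument in the limit $c_{\text{pm}}^{\text{so}} \uparrow c_{\text{pm}}^{\text{uso}}$. Proposition \ref{prop:ACOI} still yields an optimal deterministic stationary policy, and in state $2$ there is no failure risk so the optimal action is trivially ``do nothing''; the non-trivial question is what to do in state $1$ at each opportunity type.

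The first observation is that the two threshold inequalities appearing in Theorem \ref{thm:opt1}---namely $\mu_1 c_{\text{cm}} > (\mu_1+\mu_2) c_{\text{pm}}^{\text{so}}/p$ for SOs and
$$\mu_1 c_{\text{cm}} > \left(\frac{c_{\text{pm}}^{\text{uso}}}{p} - \frac{c_{\text{pm}}^{\text{uso}} - c_{\text{pm}}^{\text{so}}}{e^{(\mu_1+\mu_2)\tau}-1}\right)(\mu_1+\mu_2)$$
for USOs---both reduce to the single condition $\mu_1 c_{\text{cm}} > (\mu_1+\mu_2)c_{\text{pm}}/p$ when the two costs are set equal, since the second term in the parenthesis of the USO condition vanishes. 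This already forces the state-$1$ PM/no-PM decision to be identical at SOs and USOs. It remains to show that the USO control limit $\hat{t} = \min\{\tau,\max\{0,t^{*}\}\}$ of Theorem \ref{thm:opt1} is pinned to an endpoint: substituting $c_{\text{pm}}^{\text{so}}=c_{\text{pm}}^{\text{uso}}$ in equation \eqref{thm1eq1} and tracking the sign of the two sides as a function of $t^{*}\in\mathbb{R}$, I would show that $t^{*}\le 0$ whenever the common condition $\mu_1 c_{\text{cm}} > (\mu_1+\mu_2)c_{\text{pm}}/p$ holds (so $\hat{t}=0$ and PM is done at \emph{every} USO in state $1$) and $t^{*}\ge \tau$ otherwise (so $\hat{t}=\tau$ and PM is never done at a USO in state $1$). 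Either way the optimal rule is stationary and depends only on the state, matching the assertion of the proposition.

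The main obstacle is that Theorem \ref{thm:opt1} is stated under the \emph{strict} inequality $c_{\text{pm}}^{\text{so}} < c_{\text{pm}}^{\text{uso}}$, so the collapse argument is not literally covered by its statement. I would close this gap either by a continuity-in-parameters argument applied to the value function of the semi-Markov decision process (letting $c_{\text{pm}}^{\text{so}}\uparrow c_{\text{pm}}^{\text{uso}}$ and using that the optimal cost and the optimal action sets are upper-semicontinuous in the cost parameters), or, more transparently, by re-running the Bellman optimality analysis of Appendix \ref{proofTheorem1} with the equal-costs substitution made from the outset. In the latter route none of the manipulations in that proof genuinely require the strict inequality; the only place it is used is to place $\hat{t}$ strictly inside $(0,\tau)$ in some parameter regimes, and when the inequality degenerates to equality this step simply pins $\hat{t}$ to $\{0,\tau\}$, producing the stationary optimal policy stated in Proposition \ref{prop:equal}.
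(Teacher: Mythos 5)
Your proposal reaches the correct conclusion but takes a genuinely different primary route from the paper. The paper does not deduce Proposition \ref{prop:equal} from Theorem \ref{thm:opt1} at all: its (omitted) proof re-runs the Bellman analysis of Appendix \ref{proofTheorem1} with $c_{\text{pm}}^{\text{so}}=c_{\text{pm}}^{\text{uso}}=c_{\text{pm}}$ from the outset, ``identical in structure to the proof of Case (i)''. With equal costs the intermediate regime $\frac{c_{\text{pm}}^{\text{so}}}{p}<F_1(\tau)-F_2(\tau)<\frac{c_{\text{pm}}^{\text{uso}}}{p}$ is empty, so only two cases survive --- do nothing everywhere, or repair in state $1$ at both opportunity types for every residual time --- and in each case the differential equation for $F_1(t)-F_2(t)$ holds on all of $[0,\tau]$; the boundary condition at the SO then determines $F_1(\tau)-F_2(\tau)$ and yields the stated threshold on $\mu_1 c_{\text{cm}}$ without ever introducing $t^{*}$. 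That is precisely your ``more transparent'' fallback, and it is the cleaner argument because it avoids the two soft spots in your primary route: (a) Theorem \ref{thm:opt1} is proved only under the strict inequality $c_{\text{pm}}^{\text{so}}<c_{\text{pm}}^{\text{uso}}$, and the continuity/upper-semicontinuity of the optimal-policy correspondence in the cost parameters that your limiting argument needs is neither established in the paper for this average-cost semi-Markov model nor routine; (b) the key assertion that the root $t^{*}$ of Equation \eqref{thm1eq1} satisfies $t^{*}\leq 0$ exactly when $\mu_1 c_{\text{cm}}>(\mu_1+\mu_2)c_{\text{pm}}/p$ is left unverified. Point (b) is at least checkable and consistent: substituting $c_{\text{pm}}^{\text{so}}=c_{\text{pm}}^{\text{uso}}=c_{\text{pm}}$ and $t^{*}=0$ into \eqref{thm1eq1} collapses the large bracket to $\frac{c_{\text{pm}}}{p}-\frac{\mu_1 c_{\text{cm}}+\lambda c_{\text{pm}}}{\mu_1+\mu_2+\lambda p}$, so $t^{*}=0$ solves the equation if and only if $\mu_1 c_{\text{cm}}=(\mu_1+\mu_2)\frac{c_{\text{pm}}}{p}$, i.e.\ exactly on the boundary of the condition; but concluding the sign of $t^{*}$ off that boundary still requires a monotonicity argument you have not supplied. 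I would therefore promote your second route to the main proof (as the paper does) and keep the reduction from Theorem \ref{thm:opt1} only as a consistency check.
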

\noindent \begin{proof}
The proof of this proposition is identical in structure to the proof of Case (i) in Theorem \ref{thm:opt1} and for this reason it is omitted.  
\end{proof}

One could also argue that the cost for preventive maintenance at a USO is actually less than the cost at an SO since there is already a cost attached to the opportunity at hand (e.g., service engineers are already at a wind park and they can at a small extra cost repair other systems in close proximity as well). In this case, the optimal control policy also reduces to a stationary control limit policy, which is described in Theorem \ref{prop:unequal}. 
\begin{theorem}\label{prop:unequal}
Under the assumption that $c_{\text{pm}}^{\text{so}}>c_{\text{pm}}^{\text{uso}}$ and given the imperfect preventive maintenance probability $1-p\in(0,1]$,  the optimal policy under the average cost criterion is: For state 2 to do nothing. For state 1 to perform preventive maintenance at an unscheduled opportunity if $ \mu_1  c_{\text{cm}} > (\mu_1+\mu_2)\frac{ c_{\text{pm}}^{\text{uso}}}{p} $, and to do nothing otherwise, and to perform preventive maintenance at an SO if  $\mu_1  c_{\text{cm}} > (\mu_1+\mu_2 )\frac{c_{\text{pm}}^{\text{so}}}{p}+ \lambda ({c_{\text{pm}}^{\text{so}}}-c_{\text{pm}}^{\text{uso}})$, and to do nothing otherwise. 
\end{theorem}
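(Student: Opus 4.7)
The plan is to mirror the proof of Theorem \ref{thm:opt1} (Appendix \ref{proofTheorem1}) with the roles of SOs and USOs swapped, using the average cost optimality equation set up in Appendix \ref{sec:BellmanEq}. By Proposition \ref{prop:ACOI} it suffices to restrict attention to deterministic stationary policies; let $g$ denote the optimal long-run rate of cost and $V(i,j,t)$ the associated relative value function. The strategy is to guess the candidate policy from a direct cost comparison at each decision epoch and then verify the Bellman equation.

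The state $2$ action is immediate: in state $2$ preventive maintenance incurs a positive cost with no benefit (the system is already as good as new and the residual sojourn in state $2$ is memoryless), so ``do nothing'' is strictly better at both SOs and USOs. I would then turn to state $1$ at a USO. Because here USOs are the cheapest PM opportunity, whenever PM is worthwhile at all it should be taken at a USO, and the decision cannot benefit from deferral to the next SO or to a later USO. Consequently the $t$-dependence drops out of the Bellman equation at a USO in state $1$. Comparing ``PM now at cost $c_{\text{pm}}^{\text{uso}}$, succeeding with probability $p$'' against ``do nothing and continue in state $1$'' and geometrically summing over imperfect PM attempts yields the stationary threshold $\mu_1 c_{\text{cm}} > (\mu_1+\mu_2) c_{\text{pm}}^{\text{uso}}/p$.

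Next I would address state $1$ at an SO, which is the crucial case. Here doing nothing is attractive because a strictly cheaper USO may arrive at Poisson rate $\lambda$ before failure at rate $\mu_1$. Using the USO-policy characterized in the previous step, I would write the Bellman equation for the ``do nothing at SO'' value: the system evolves in state $1$ until either failure or the first USO, and at that USO the optimal action is ``PM at cost $c_{\text{pm}}^{\text{uso}}$''. Comparing this against the value of PM at the SO, which costs $c_{\text{pm}}^{\text{so}}$, the difference produces an additive correction $\lambda(c_{\text{pm}}^{\text{so}} - c_{\text{pm}}^{\text{uso}})$ representing the expected rate of savings accrued by deferring PM from an SO to the next USO. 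This gives the SO threshold $\mu_1 c_{\text{cm}} > (\mu_1+\mu_2) c_{\text{pm}}^{\text{so}}/p + \lambda(c_{\text{pm}}^{\text{so}} - c_{\text{pm}}^{\text{uso}})$.

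The main obstacle I expect is the bookkeeping for the imperfect-PM geometric series entwined with the Poisson timing of USOs: after PM at a USO in state $1$ the system returns to state $1$ with probability $q = 1-p$, triggering another candidate PM at the next USO or at the next SO, and one has to verify that the resulting sums collapse into the clean additive form $\lambda(c_{\text{pm}}^{\text{so}} - c_{\text{pm}}^{\text{uso}})$ with no residual dependence on $\tau$. Once the candidate policy is pinned down, verifying that it satisfies the average cost optimality equation with the correct $g$ (and invoking the verification theorem cited in Appendix \ref{sec:BellmanEq}) closes the argument.
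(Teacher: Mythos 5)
Your candidate policy and both thresholds are correct, and your foundation (restrict to deterministic stationary policies via Proposition \ref{prop:ACOI}, then work with the average cost optimality equations of Appendix \ref{sec:BellmanEq}) is the same as the paper's. The paper's own proof of this theorem is simply the four-case analysis of Theorem \ref{thm:opt1} with the roles of the two cost levels swapped: it partitions according to whether $F_1(\tau)-F_2(\tau)$ lies below $c_{\text{pm}}^{\text{uso}}/p$, between $c_{\text{pm}}^{\text{uso}}/p$ and $c_{\text{pm}}^{\text{so}}/p$, or above $c_{\text{pm}}^{\text{so}}/p$, solves the resulting ODE for $F_1(t)-F_2(t)$ in each case, and closes each case with the periodic boundary relation $F_i(0)=V(i,\text{SO},0)$. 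Your ``guess the policy, then verify the Bellman equation'' route would, if completed, reproduce exactly that computation, so the two approaches are not genuinely different; the nesting of your two thresholds (the SO condition implies the USO condition, since $\lambda(c_{\text{pm}}^{\text{so}}-c_{\text{pm}}^{\text{uso}})>0$) is what the paper's ordered cases encode automatically.

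The one place where you substitute intuition for the argument that actually carries the proof is the claim that ``the $t$-dependence drops out of the Bellman equation at a USO in state 1'' because USOs are the cheapest opportunity. What must be shown is that $F_1(t)-F_2(t)$ never crosses the level $c_{\text{pm}}^{\text{uso}}/p$ on $[0,\tau]$ under the candidate policy, and this is not automatic: in the regime where PM is also performed at SOs, $F_1(t)-F_2(t)$ is \emph{not} constant (the boundary condition becomes $F_1(0)-F_2(0)=c_{\text{pm}}^{\text{so}}+(1-p)\bigl(F_1(\tau)-F_2(\tau)\bigr)$, which differs from $F_1(\tau)-F_2(\tau)$), so only the \emph{decision} is $t$-independent, not the value difference. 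Establishing this requires solving the ODE $F_1'(t)-F_2'(t)=-(\mu_1+\mu_2+\lambda p)\bigl(F_1(t)-F_2(t)\bigr)+\mu_1 c_{\text{cm}}+\lambda c_{\text{pm}}^{\text{uso}}$, observing monotonicity, and checking that the minimum $F_1(0)-F_2(0)\ge c_{\text{pm}}^{\text{so}}/p> c_{\text{pm}}^{\text{uso}}/p$; this is precisely the contradiction-and-monotonicity argument of Case (iii) in Appendix \ref{proofTheorem1}, which here works in your favor because the more expensive SO removes the incentive to skip a USO that produced the interior control limit $t^*$ in Theorem \ref{thm:opt1}. Your closing ``verification'' step is where this must be done, so the plan is sound but the heavy lifting is deferred rather than avoided.
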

\noindent \begin{proof}
See Appendix \ref{proofTheorem2}. \end{proof}

\subsection{Long-run rate of cost per time unit}
\label{subsec:longrunaverage}
In the previous section, we characterized the structure of the optimal policy using the average cost criterion. This policy can be viewed as a control limit policy, with the control limit depending on the time until the next SO. In this section, we consider such a policy and we compute the long-run rate of cost per time unit. More concretely, we consider a policy under which in state 2 we do not perform preventive maintenance (i.e. we do nothing), and in state 1 we  always perform preventive maintenance at SOs and we perform preventive maintenance at USOs if the remaining time till the next SO is greater than $\tilde{t}$, for some given value $\tilde{t}\in(0,\tau)$. The results obtained in this section are directly applicable to the results of Section \ref{Sec:AverageCostCriterion}, by  setting $\tilde{t}=t^*$, cf. Theorem \ref{thm:opt1}.

For the computation of the long-run rate of cost per time unit, we employ  the theory of regenerative-like processes, also called stationary-cycle processes, described in Section 2.19 of \cite{serfozo2009basics}. To this purpose, we consider the inter-regeneration times created by the SOs $\{\tau, 2\tau, 3\tau, \ldots\}$. For the cost computation, we assume that, at the SOs, the system is in state 1 or 2 according to a stationary probability $p_1(0)$ and $p_2(0)$, respectively.  The long-run rate of cost per time unit is calculated as the expected total cost incurred between consecutive SOs divided by $\tau$.

Let $p_i(t)$ be the probability that the system is in state $i \in \{1,2\}$ given that the time until the next SO is $t\in[0,\tau)$, then the long-run rate of cost per time unit for this control limit policy (depending on the remaining time until the next planned maintenance) for any given time threshold is given in the next theorem. 

\begin{theorem}\label{them:costs}
Consider a given policy under which in state $2$ we opt for the action do nothing, and in state $1$ we repair at scheduled opportunities and at unscheduled opportunities for which the remaining time until the next scheduled opportunity is greater than $\tilde{t}\in(0,\tau)$, and we do nothing otherwise. Under this policy, the long-run rate of cost per time unit equals 
\begin{equation}
\frac{c_{\text{pm}}^{\text{so}}p_1(0)+c_{\text{pm}}^{\text{uso}}\lambda \int_{\tilde{t}}^{\tau}p_1(t)\dif t + c_{\text{cm}} \mu_1 \int_{0}^{\tau} p_1(t) \dif t}{\tau},\label{Theoretical_cost}
\end{equation}
with 
\begin{numcases}{ p_1(t) = }
\frac{\mu_2}{\mu_1+\mu_2}   + C_1 \, e^{(\mu_1+\mu_2)t},\ t\in [0,\tilde{t}), \label{diff1} \\
\frac{\mu_2}{\mu_1+\mu_2 + \lambda p } + C_2  \, e^{(\mu_1+\mu_2 +\lambda p)t}, \  t\in [\tilde{t},\tau), \label{diff2}
\end{numcases}
where the constants $C_1$ and $C_2$ are obtained as follows
\begin{align*}
C_1&=C_2\,  e^{\lambda p \tilde{t}}-\frac{\mu_2 }{\mu_1+\mu_2}\frac{\lambda p}{\mu_1+\mu_2 + \lambda p }e^{-(\mu_1+\mu_2)\tilde{t}},\\
C_2&=\frac{\frac{\mu_2}{\mu_1+\mu_2 } \left(1-e^{-(\mu_1+\mu_2) \tilde{t}}\right)
+ \frac{\mu_2}{\mu_1+\mu_2 + \lambda p }\left(\frac{1}{1-p}-e^{-(\mu_1+\mu_2) \tilde{t}}\right)}
{\frac{1}{1-p}e^{(\mu_1+\mu_2 + \lambda p )\tau}-e^{\lambda p \tilde{t}}}.
\end{align*}
\end{theorem}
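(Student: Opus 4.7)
The plan is to invoke the stationary-cycle framework of \cite{serfozo2009basics} (Section 2.19), taking the successive scheduled opportunities $\{k\tau\}_{k\geq 1}$ as the regeneration-like epochs. By stationary-cycle theory, the long-run rate of cost per time unit equals the expected total cost incurred in one cycle divided by $\tau$, so the task reduces to (i) computing the per-cycle expected cost in terms of $p_1(t)$, and (ii) characterizing the stationary conditional probability $p_1(t)$ of being in state~$1$ given that the remaining time until the next SO equals $t$.

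First, I would decompose the per-cycle expected cost into three contributions. The SO contribution is $c_{\text{pm}}^{\text{so}}p_1(0)$, since PM is applied at the SO exactly when the system is in state~$1$. For the USO contribution I would use Campbell's formula: the USOs form a Poisson process of rate $\lambda$ independent of the condition process, and a PM at a USO is performed iff the system is in state~$1$ and the residual time to the next SO exceeds $\tilde{t}$, producing $c_{\text{pm}}^{\text{uso}}\lambda\int_{\tilde{t}}^{\tau}p_1(t)\,\dif t$. An analogous rate-conditioning argument for failures from state~$1$ at rate $\mu_1$ yields $c_{\text{cm}}\mu_1\int_{0}^{\tau}p_1(t)\,\dif t$. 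Summing and dividing by $\tau$ recovers the expression in~\eqref{Theoretical_cost}.

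To obtain $p_1(t)$ itself I would write the Kolmogorov forward equation for the state-probability flow parameterized by real time within a cycle (equivalently, by $t$ run backwards from $\tau$ to $0$). On the USO-PM-active regime $t\in[\tilde{t},\tau)$, state~$1$ leaves to state~$2$ at effective rate $\mu_1+\lambda p$ (failure with instantaneous replacement, plus a successful imperfect PM at a USO), while state~$2$ moves to state~$1$ at rate $\mu_2$; on the inactive regime $t\in[0,\tilde{t})$ the $\lambda p$ term drops out. Integrating the two first-order linear ODEs delivers the general forms \eqref{diff1}--\eqref{diff2}, with the respective plateaus $\mu_2/(\mu_1+\mu_2)$ and $\mu_2/(\mu_1+\mu_2+\lambda p)$ appearing naturally as the steady-state limits.

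The two constants $C_1,C_2$ are then determined by two boundary conditions: continuity of $p_1$ at the policy break $t=\tilde{t}$, and the stationary ``periodicity'' identity $\lim_{t\to\tau^-}p_1(t)=(1-p)\,p_1(0)$. The latter captures the imperfect maintenance: at the SO, state~$1$ is converted to state~$2$ only with probability~$p$, so a fraction $1-p$ of cycle-starts still find the system in state~$1$. This yields a $2\times 2$ linear system whose solution is the closed form stated in the theorem. I expect the main subtlety to lie precisely in this imperfect-PM boundary condition: unlike the perfect-PM case (where the post-PM distribution is degenerate and the two regimes decouple), the factor $1-p$ couples $p_1(\tau^-)$ and $p_1(0)$, so $C_1$ and $C_2$ must be solved simultaneously; the remaining algebra is routine but has to be carried out carefully to match the asserted form.
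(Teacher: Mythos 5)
Your proposal is correct and follows essentially the same route as the paper: regeneration at the SOs with cost-per-cycle divided by $\tau$, the three-way cost decomposition in terms of $p_1$, the piecewise linear ODEs with leaving rate $\mu_1+\lambda p$ on $[\tilde t,\tau)$ and $\mu_1$ on $[0,\tilde t)$, and the constants fixed by normalization, continuity at $\tilde t$, and the imperfect-PM boundary condition $\lim_{t\to\tau^-}p_1(t)=(1-p)p_1(0)$. The only cosmetic differences are that you invoke Campbell's formula and the Kolmogorov forward equations by name where the paper uses an explicit infinitesimal-interval argument.
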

\noindent \begin{proof} The expected total cost incurred in one cycle consists of three parts (cf. Equation \eqref{Theoretical_cost}), which are related to the expected cost associated with preventive maintenance at SOs, with preventive maintenance at  USOs and with corrective maintenance, respectively. It is now sufficient to derive $p_i(t)$ for $t\in[0,\tau)$, $i \in \{1,2\}$. 

For $t\in [\tilde{t},\tau)$, the time-dependent behavior of $p_1(t)$ is governed by  
\begin{align}
p_1(t) &= p_1(t+ \dif t)(1-(\mu_1+\lambda p)\dif t) + p_2(t+\dif t)\mu_2 \dif t. \label{timedepeq}
\end{align}
Equation \eqref{timedepeq} is easily obtained by considering a small time interval of length $\dif t$, and noticing that at time $t$ we are in state 1 either due to a transition from state 2 with infinitesimal probability $\mu_2 \dif t$ or we have remained in state 1 with infinitesimal probability $1-(\mu_1+\lambda p)\dif t$. 
Subtracting $p_1(t+\dif t)$ from both sides of Equation \eqref{timedepeq}, after some straightforward computations, yields $$p_1(t+\dif t)-p_1(t) = p_1(t+\dif t)(\mu_1+\lambda p)\dif t - p_2(t+\dif t)\mu_2 \dif t.$$
Dividing this expression by $\dif t$ and letting $\dif t\rightarrow 0$ results in $$
p_1^{'}(t) = p_1(t)(\mu_1+\lambda p) - p_2(t)\mu_2.$$
Following a similar analysis for $p_2(t)$, yields the following system of differential equations, for $t\in [\tilde{t},\tau)$,
\begin{align}
& \left[ \begin{array}{c} p_1'(t) \\ p_2'(t) \end{array} \right] = \begin{bmatrix} \mu_1 + \lambda p & -\mu_2 \\ -(\mu_1 + \lambda p) & \mu_2 \end{bmatrix} \times  \left[ \begin{array}{c} p_1(t) \\ p_2(t) \end{array} \right], & t\in [\tilde{t},\tau). \label{diff2sol} 
\end{align} 
Similarly, for $t\in [0,\tilde{t})$, we have 
\begin{align}
& \left[ \begin{array}{c} p_1'(t) \\ p_2'(t) \end{array} \right] = \begin{bmatrix} \mu_1  & -\mu_2 \\ -\mu_1 & \mu_2 \end{bmatrix} \times  \left[ \begin{array}{c} p_1(t) \\ p_2(t) \end{array} \right], & t\in [0,\tilde{t}). \label{diff1sol} 
\end{align} 
Solving the system of differential Equations \eqref{diff2sol} and \eqref{diff1sol} leads to the desired solutions \eqref{diff1} and \eqref{diff2}, respectively. In this process, we would need to compute four unknown constants. This is achieved by using: (i) the normalizing condition, i.e. $p_1(t)+p_2(t)=1$ for all $t\in[0,\tau)$, (ii) the continuity condition at $\tilde{t}$, i.e.  $\lim \limits_{t\to\tilde{t}^-} p_i(t)=p_i(\tilde{t})$ for $i\in \{1,2\}$, and (iii) the boundary condition at the SOs imposed by the policy and the imperfect maintenance probability, i.e. $(1-p)p_1(0) = \lim \limits_{t\to\tau^-}p_1(t)$.
\end{proof}

\subsubsection{Special cases} 
\label{subsec:specialcases}
In case of only \emph{scheduled opportunities}, which corresponds to the case $\tilde{t}\rightarrow \tau$ or, equivalently, to the case $\lambda \rightarrow 0$, the probabilities $p_i(t)$ for $i \in \{1,2\}$ are derived from the system of linear equations in \eqref{diff1sol} plus the normalizing condition, i.e. $p_1(t)+p_2(t)=1$ for all $t\in[0,\tau)$. This yields 
$$ p_1(t) = \frac{\mu_2}{\mu_1+\mu_2} \left(1 - \frac{p e^{(\mu_1+\mu_2)t}}{e^{(\mu_1+\mu_2)\tau}-1+p}\right),\ t\in [0,\tau). $$
Plugging the above result into Equation \eqref{Theoretical_cost}, after appropriately considering in Equation \eqref{Theoretical_cost} only the costs related to preventive maintenance at SOs and corrective maintenance
\begin{equation*}
\frac{c_{\text{pm}}^{\text{so}}p_1(0)+ c_{\text{cm}} \mu_1 \int_{0}^{\tau} p_1(t) \dif t}{\tau},
\end{equation*}
leads to the long-run rate of cost per time unit in the case of only SOs.  

In case of \emph{perfect maintenance}, i.e.  in case $p=1$,  
the boundary condition at the SOs imposed by the policy and the imperfect maintenance in the proof of Theorem \ref{them:costs} reduces to $\lim \limits_{t\to\tau^-}p_1(t)=0$, as immediately after an SO, the system is restored to state 2 with probability 1. This enables us to explicitly solve the system of linear Equations \eqref{diff2sol} and \eqref{diff1sol}, yielding
\begin{align*}
p_1(t)=&\frac{\mu_2}{\mu_1+\mu_2} + \del{\frac{\mu_2}{\lambda+\mu_1+\mu_2}-\frac{\mu_2}{\mu_1+\mu_2}-\frac{\mu_2}{\lambda+\mu_1+\mu_2}e^{\del{\lambda+\mu_1+\mu_2}(\tilde{t}-\tau+\frac{\Lambda(t)}{\lambda}(t-\tilde{t}))}}e^{\frac{\lambda-\Lambda(t)}{\lambda}(\mu_1+\mu_2)(t-\tilde{t})},
\end{align*}
where
\begin{align*}
\Lambda(t)=\begin{cases}
0,&\text{ if } \ 0\leq t<\tilde{t},\\
\lambda,&\text{ if }\ \tilde{t}\leq t<\tau.
\end{cases}
\end{align*}
Combining this expression with Equation \eqref{Theoretical_cost}, results in the long-run rate of cost per time unit in the case of perfect maintenance.

In case of only {\em unscheduled opportunities}, which is equivalent to considering  $\tau\to\infty$, the condition of the system can be fully described using a double descriptor
$\mathcal{S}=\left\{(i,j):\ i\in \{1,2\}, \ j\in\{\text{SC},\text{USO}\}\right\}$ which is independent of time, and thus the new model formulation falls into the framework of regular Markov decision processes. It can be easily shown that:
For state $2$, the optimal policy is to do nothing, and, for state 1, the optimal policy is  to repair if
$\frac{(\mu_1+\mu_2)c_{\text{pm}}^{\text{uso}}}{p} < \mu_1 c_{\text{cm}}$
and to do nothing otherwise. Furthermore, under the optimal policy the  average long-run rate of cost is equal to
$$ \frac{c_{\text{pm}}^{\text{uso}}\lambda\mu_2+c_{\text{cm}}\mu_1\mu_2}{\lambda p +\mu_1+\mu_2}.$$

In case of only {\em corrective replacements}, the long-run rate of cost is equal to
$$c_{\text{cm}}\frac{\mu_1\mu_2}{\mu_2+\mu_1}.$$

\section{Deferring planned maintenance}
\label{sec:rescheduling}
In this section, we consider that upon a successful maintenance activity (preventively, at an SO or at a USO, or correctively), the upcoming planned maintenance is deferred for a period of length $\tau$, i.e. at the instances of successful maintenance the remaining time till the next SO is set equal to $\tau$. We are interested in computing the long-run rate of cost under deferred maintenance and, in Section \ref{sec:Num_Def_M}, using the results of this section and of the previous sections, in investigating the economical benefits of deferring planned maintenance.

Analogously to the analysis of Section \ref{subsec:longrunaverage}, we derive the long-run rate of cost using renewal theory, see, e.g., \cite[Proposition 7.3, page 433]{ross2014introduction}. In this case, we consider the renewal points to be the instances at which there was a successful maintenance activity, i.e. the SOs or USOs at which the preventive maintenance was perfect, or the epochs at which corrective maintenance is performed. Note that the underlying stochastic process that governs the condition of the system, regenerates after each successful maintenance activity.  That is, after each successful maintenance activity the underlying stochastic process is in state 2 with probability 1. The long-run rate of cost per time unit for a policy in the class of optimal policies is given in the next theorem. As the expressions appearing in the theorem do not simplify upon further computations, we choose to present them in the form of probabilities and expectations associated with the exponential distribution, as these expressions are straightforward (though cumbersome to compute) and shed insight on each of the individual events participating in the final expression, cf. Equation \eqref{Eq:longrunrateofcostdef}. 
\begin{theorem}\label{them:costsDeferring}
Consider a given policy under which in state $2$ we do nothing, and in state $1$ we repair at scheduled opportunities and at unscheduled opportunities for which the remaining time until the next scheduled opportunity is greater than $\tilde{t}\in(0,\tau)$, and we do nothing otherwise. Furthermore, consider that planned maintenance is deferred after a successful maintenance. Under this setting, the long-run rate of cost per time unit equals 
\begin{align}\label{Eq:longrunrateofcostdef}
\frac{\E\left[\text{Total cycle cost}\right]}{\E\left[\text{Total cycle length}\right]} = \frac{\E\left[C\!C\right]}{ \frac{1}{\mu_2} + \E\left[C\!L\right]}= \frac{\E\left[C\!C\, \i_{\{C\!L\,  \leq Y\}}\right]+ \E\left[C\!C\,\i_{\{C\!L\,  > Y\}} \right]}{ \frac{1}{\mu_2} + \E\left[C\!L\,  \i_{\{C\!L\, \leq Y\}}\right] + \E\left[C\!L\,  \i_{\{C\!L\, >Y\}}\right]},
\end{align}
with
\begin{align}
\E\left[C\!L\,  \i_{\{C\!L\,  \leq Y\}} \right]  &=
\E\left[C\!L\,  \i_{\{\text{USO}{[\tau- Y,\tau-\tilde{t}]}\}}\right] + \E\left[C\!L\,  \i_{\{\text{SO}{[\tau-Y,\tau]}\}}\right] + \E\left[C\!L\,  \i_{\{\text{CM}{[\tau- Y,\tau]}\}}\right], \label{eclFirstPart_thm}\\
\E\left[C\!L\,  \i_{\{C\!L\, >Y\}}\right] &= (1-p)\P\left[ \text{SO}{[\tau-Y,\tau]} \right] \Bigg( \E[Y] +\frac{\tau (1-p)  \P\left[\text{SO}{[0,\tau]} \,\right] }{1-(1-p)  \P\left[\text{SO}{[0,\tau]} \,\right] } \nonumber \\
&\quad + \E\left[C\!L'\,  \i_{\{C\!L'\leq Y\}} \,|\, Y=\tau \right] \Bigg),\label{EXP2_thm}\\
 \E\left[C\!C\, \i_{\{C\!L\,  \leq Y\}}\right] &= \E\left[C\!C\,\i_{\{\text{USO}{[\tau-Y,\tau-\tilde{t}]}\}}\right] + \E\left[ C\!C\, \i_{\{ \text{SO}{[\tau-Y,\tau]}\}}\right] + \E\left[C\!C\,\i_{\{\text{CM}{[\tau-Y,\tau]}\}}\right] , \label{ECCfirstPart_thm}\\
\E\left[C\!C\,\i_{\{C\!L\,  > Y\}}\right]&= (1-p) \P \left[ \text{SO}{[\tau-Y,\tau]} \right] \Bigg(\E\left[C\!C\,\i_{\{\text{SO}{[\tau-Y,\tau]}\}}\right] \nonumber \\ 
&\quad +\frac{(\lambda (1-p)(\tau -\tilde{t})c_{\text{pm}}^{\text{uso}} +c_{\text{pm}}^{\text{so}} ) (1-p)  \P\left[\text{SO}{[0,\tau]} \,\right] }{1-(1-p)  \P\left[\text{SO}{[0,\tau]} \,\right] } + \E\left[C\!C\,\i_{\{C\!L'\,  \leq Y \}}\,|\, Y=\tau\right] \Bigg),\label{costECC}
\end{align}
where the density of the truncated exponential random variable  $Y$ is given by
\begin{align}
f_Y(y)&=\mu_2 \frac{ e^{-\mu_2(\tau-y)}}{1-e^{-\mu_2 \tau}}, \ y\in[0,\tau), \label{residualTime_thm}
\end{align}
and with, for $0\leq y\leq \tau$, 
\begin{align}
\i_{\{ \text{SO}{[\tau-y,\tau]}\}} &\stackrel{d}{=}\i_{\{y<\min\{T_{\lambda p},T_{\mu_1}\}\}}+\i_{\{T_{\lambda p}< y<\min\{T_{\mu_1},\tilde{t}\}\}}\i_{\{y< \tilde{t}\}}\nonumber\\
&\quad+\i_{\{y-\tilde{t}\leq T_{\lambda p}<y,  y\leq T_{\mu_1}\}} \i_{\{y\geq \tilde{t}\}}, \label{ISO_thm}\\
\i_{\{\text{USO}{[\tau-y,\tau-\tilde{t}]}\}} &\stackrel{d}{=}\i_{\{T_{\lambda p}<\min\{T_{\mu_1},y-\tilde{t}\}\}}\i_{\{y\geq \tilde{t}\}},  \label{IUSO_thm}\\\
\i_{\{\text{CM}{[\tau-y,\tau]}\}} &\stackrel{d}{=} 
\i_{\{T_{\mu_1}<\min\{y,T_{\lambda p}\}\}}+\i_{\{T_{\lambda p}<T_{\mu_1}<y\}}\i_{\{y<\tilde{t}\}} \nonumber\\
&\quad+\i_{\{T_{\lambda p}<T_{\mu_1}<y,\, T_{\lambda p}\geq y-\tilde{t}\}}\i_{\{y\geq\tilde{t}\}},\label{ICM_thm}\\
%
%
\E\left[C\!L\,  \i_{\{\text{USO}{[\tau- y,\tau-\tilde{t}]}\}}\right]  &= \E\left[ T_{\lambda p} \i_{\{\text{USO}{[\tau- y,\tau-\tilde{t}]}\}}   \right],  \label{USOCE_thm}\\
\E\left[C\!L\,  \i_{\{\text{SO}{[\tau- y,\tau]}\}}\right]  &= y p \P\left[\text{SO}[\tau-y,\tau\,]\right],\label{SOCE_thm}\\
\E\left[C\!L\,  \i_{\{\text{CM}{[\tau- y,\tau]}\}}\right] &= \E[T_{\mu_1}\i_{\{\text{CM}[\tau-y,\tau]\}}] ,
\label{CMCE_thm}\\
\E\left[C\!C\,\i_{\{\text{USO}{[\tau-y,\tau-\tilde{t}]}\}}\right] &= c_{\text{pm}}^{\text{uso}} \,\P\left[\text{USO}{[\tau-y,\tau-\tilde{t}]}\,\right]+ \lambda(1-p) c_{\text{pm}}^{\text{uso}}  \E\left[T_{\lambda p} \i_{\{\text{USO}{[\tau-y,\tau-\tilde{t}]}\}} \right], \label{costUSO} \\
\E\left[C\!C\,\i_{\{\text{SO}{[\tau-y,\tau]}\}}\right] &= \Big(c_{\text{pm}}^{\text{so}} + \lambda(1-p) c_{\text{pm}}^{\text{uso}}  \max\left\{y-\tilde{t},0\right\} \Big)\P\left[\text{SO}{[\tau-y,\tau]}\,\right], \label{costSO} \\ 
\E\left[C\!C\,\i_{\{\text{CM}{[\tau-y,\tau]}\}}\right] &= c_{\text{cm}} \P\left[\text{CM}{[\tau-y,\tau]}\right] \nonumber \\ &\quad + \lambda(1-p) c_{\text{pm}}^{\text{uso}} \E\left[\min\left\{ T_{\mu_1},\max\left\{y-\tilde{t},0\right\} \right\} \,\i_{\{\text{CM}{[\tau-y,\tau]}\}} \right], \label{costCM}
\end{align}
where  $\i_{\{x\}}$ is an indicator function taking value 1 if event $x$ occurs, and it is zero otherwise, 
$T_{\mu_1}\sim\text{Exp}(\mu_1)$, $T_{\lambda p}\sim\text{Exp}(\lambda p)$,  $\P \left[\,\cdot\,\right] =\E[\i_{\{\cdot\}}]$ for all events in Equations \eqref{ISO_thm}--\eqref{ICM_thm}, and $C\!L\stackrel{d}{=}C\!L'$.
\end{theorem}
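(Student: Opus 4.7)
The plan is to invoke the renewal reward theorem with renewal epochs chosen to be the instants of successful maintenance (successful PM at an SO, successful PM at a USO, or CM upon failure). Because a successful maintenance restores the system to state 2 and simultaneously defers the next SO to exactly $\tau$ ahead, every cycle decomposes into (a) an initial state-2 sojourn of length $T_{\mu_2}\sim\Exp(\mu_2)$ during which no action is taken and no cost is accrued, and (b) a state-1 sojourn of length $CL$ and cost $CC$ ending at the next successful maintenance. This already gives the shape of \eqref{Eq:longrunrateofcostdef} with denominator $1/\mu_2+\E[CL]$.

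I would next derive the density of the residual time $Y$ to the next SO at the moment the system enters state 1. Since the SO grid is not reset while in state 2, $Y\stackrel{d}{=}\tau-(T_{\mu_2}\bmod\tau)$; partitioning $T_{\mu_2}$ over the intervals $[k\tau,(k+1)\tau)$ and summing the resulting geometric series in the exponential tails delivers the truncated-exponential density \eqref{residualTime_thm}.

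Conditional on $Y=y$, the state-1 dynamics reduce to a race among three independent clocks: the CM clock $T_{\mu_1}\sim\Exp(\mu_1)$, the thinned Poisson clock $T_{\lambda p}\sim\Exp(\lambda p)$ of PM-successful USOs (active only on the initial interval of length $\max\{y-\tilde t,0\}$, since USOs do nothing when the remaining time to the SO is below $\tilde t$), and the deterministic SO at time $y$. Splitting $\{CL\le Y\}$ into the three disjoint sub-events ``successful USO first'', ``CM first'' and ``SO reached and successful'' yields the decompositions \eqref{eclFirstPart_thm}--\eqref{ECCfirstPart_thm}; elementary minima-of-exponentials computations then give the indicator representations \eqref{ISO_thm}--\eqref{ICM_thm} and the length contributions \eqref{USOCE_thm}--\eqref{CMCE_thm}. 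For the cost identities \eqref{costUSO}--\eqref{costCM} one adds, on top of the terminating cost ($c_{\text{pm}}^{\text{uso}}$, $c_{\text{pm}}^{\text{so}}$, or $c_{\text{cm}}$), the contribution of the failed USOs that accumulate while the PM-eligible zone is still open: by thinning these form an independent Poisson process of rate $\lambda(1-p)$, so Campbell's theorem yields an expected cost of $\lambda(1-p)c_{\text{pm}}^{\text{uso}}$ per unit of active time.

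On the complementary event $\{CL>Y\}$ the first SO is reached and fails, which happens with probability $(1-p)\P[\mathrm{SO}[\tau-Y,\tau]]$; by the deferral rule and the memorylessness of the exponential clocks, the state-1 process then restarts afresh with residual SO time $\tau$. Setting $q=(1-p)\P[\mathrm{SO}[0,\tau]]$, the number of such failed restarts before the cycle finally terminates is geometric with success probability $1-q$, contributing expected length $\tau q/(1-q)$ and expected cost $\bigl(c_{\text{pm}}^{\text{so}}+\lambda(1-p)(\tau-\tilde t)c_{\text{pm}}^{\text{uso}}\bigr)q/(1-q)$; adding the initial segment of duration $Y$ together with the length and cost of the single terminating restart, $\E[CL'\i_{\{CL'\le Y\}}\mid Y=\tau]$ and $\E[CC'\i_{\{CL'\le Y\}}\mid Y=\tau]$, and then integrating against the density of $Y$ produces \eqref{EXP2_thm} and \eqref{costECC}. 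Substituting into $\E[CC]/(1/\mu_2+\E[CL])$ yields \eqref{Eq:longrunrateofcostdef}.

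The hardest part will be the bookkeeping in this geometric expansion: one must carefully disentangle the costs that accrue on every failed restart (the $c_{\text{pm}}^{\text{so}}$ of the failed SO itself, together with the expected failed-USO cost $\lambda(1-p)(\tau-\tilde t)c_{\text{pm}}^{\text{uso}}$ over the full PM-eligible zone of a restart) from the single cost-and-length pair of the terminating restart, while taking care not to double-count the first $Y$-segment when composing the recursion with the distribution of $Y$.
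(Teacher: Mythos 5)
Your proposal is correct and follows essentially the same route as the paper's proof: renewal–reward with successful maintenances as regeneration epochs, the truncated-exponential law for $Y$ (your $\bmod\,\tau$ argument is equivalent, by memorylessness, to the paper's conditioning $f_{T_{\mu_2}}(\tau-y\mid T_{\mu_2}<\tau)$), decomposition of $\{C\!L\le Y\}$ by the type of terminating event, a geometric number of failed inter-SO intervals with parameter $p_u=(1-p)\P[\text{SO}[0,\tau]]$ for the event $\{C\!L>Y\}$, and the rate-$\lambda(1-p)$ thinned Poisson stream accounting for the unsuccessful-USO costs.
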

\noindent \begin{proof}
See Appendix \ref{proofTheorem4}. \end{proof}

\section{Numerical results} 
\label{sec:numerical}
Using the results and the analyses of the previous sections, in this section, we illustrate through a few well chosen examples the effect of the various parameters in the long-run rate of cost. In these examples, we investigate the financial advantage of the optimal policy, when compared to other (suboptimal) policies. Furthermore, we highlight the financial benefit of perfect maintenance by comparing the long-run rate of cost for the perfect maintenance model ($p=1$) to that of the imperfect maintenance model ($p\in(0,1)$). Here, we also show the influence of imperfect maintenance on the maintenance planning. In addition, we illustrate the change introduced by the action of deferring planned maintenance after the occurrence of a successful maintenance. To illustrate the financial effects in a realistic context and to connect our analysis with the practice, we use values and data stemming from the wind industry.

\subsection{Comparison of the optimal policy to suboptimal policies}
In this section, we compute, in the context of the wind industry example, the long-run rate of cost under the optimal policy and we examine how it is affected by varying one by one the parameters $\tau$, $\lambda$ and $c_{\text{pm}}^{\text{uso}}$, while keeping all other parameters fixed. For the determination of the values used in the numerical computations of this section, we consider the gearbox of a wind turbine. Statistics from a recent field study by \cite{ribrant2007survey} on Swedish wind parks in the period 1997-2005 showed that the gearbox is the most critical unit of a wind turbine. The notion of criticality is determined by the fact that a failure of the gearbox leads to the highest downtime when compared to all other wind turbine components, but also by the fact that this component has the highest failure rate among all wind turbine components \citep{ribrant2007survey,tavner2007reliability,spinato2009reliability}. Due to its extended downtime after a failure (which is captured in the corresponding maintenance cost), the corrective cost of a gearbox is relatively high compared to preventive maintenance costs, see, e.g.,  \cite{nilsson2007maintenance}. Based on the values reported in the aforementioned studies, we set $c_{\text{cm}}=300000$, $c_{\text{pm}}^{\text{so}}=1000$, $\mu_2=0.31$, $\mu_1=0.31$ and $p=0.6$.  In this case, the long-run rate of cost (in euros per year) in case of only corrective replacements is equal to 46500. Furthermore, motivated by the wind industry practice, we choose three different values for $\tau$, that is $\tau \in \{0.25, 0.5, 1\}$ (years). Next, we consider three different values for $c_{\text{pm}}^{\text{uso}}$, i.e. $c_{\text{pm}}^{\text{uso}} \in \{ 2000, 3000, 4000 \}$. Finally, with regard to $\lambda$, we consider four different values, i.e. $\lambda \in \{0.5, 1, 2, 4\}$. 

In Table \ref{tab:sensitivity}, we depict the long-run rate of cost for the above mentioned values under four different policies: The first policy corresponds to replacements only at USOs ($\pi_{\text{uso}}$). The second policy corresponds to replacements only at SOs ($\pi_{\text{so}}$). The third policy is the optimal policy ($\pi_{\text{opt}}$), which is derived in Theorem \ref{thm:opt1}. Note, that it is numerically easier to obtain the optimal $\tilde{t}$ by minimizing the long-run rate of cost in Theorem \ref{them:costs}, instead of the closed-form expression in Theorem \ref{thm:opt1}, as the latter requires the derivation of a root solution. The fourth policy concerns the optimal policy, but for $p=1$. This assumption is motivated from the practice, as it is oftentimes difficult to exactly determine the value of $p$ and it is typically assumed that after a maintenance the component is restored to a perfect state. This policy is denoted by $\pi'_{\text{opt}}$.

\begin{table}[!htb]
\small
\centering
\begin{tabular}{cc|cccc|cccc|cccc} 
\toprule
                                          & \multicolumn{1}{l}{}                           & \multicolumn{4}{c}{$\tau = 0.25$}                                                                                                                                                                                                                                                                    & \multicolumn{4}{c}{$\tau = 0.5$}                                                                                                                                                                                                                                                                     & \multicolumn{4}{c}{$\tau = 1$}                                                                                                                                                                                                                                                                        \\ 
\cmidrule{3-14}
$c_{\text{pm}}^{\text{uso}}$ & \multicolumn{1}{l}{$\lambda$} & \multicolumn{1}{l}{$\pi_{\text{uso}}$} & \multicolumn{1}{l}{$\pi_{\text{so}}$} & \multicolumn{1}{l}{$\pi_{\text{opt}}$} & \multicolumn{1}{l}{$\pi'_{\text{opt}}$} & \multicolumn{1}{l}{$\pi_{\text{uso}}$} & \multicolumn{1}{l}{$\pi_{\text{so}}$} & \multicolumn{1}{l}{$\pi_{\text{opt}}$} & \multicolumn{1}{l}{$\pi'_{\text{opt}}$} & \multicolumn{1}{l}{$\pi_{\text{uso}}$} & \multicolumn{1}{l}{$\pi_{\text{so}}$} & \multicolumn{1}{l}{$\pi_{\text{opt}}$} & \multicolumn{1}{l}{$\pi'_{\text{opt}}$}  \\ 
\cmidrule(r){1-14}
\multirow{4}{*}{2000}                     & 0.5                                            & 31674                                                                       & 7624                                                                       & 7193                                                                        & 7208                                                                         & 31674                                                                       & 12927                                                                      & 11627                                                                       & 11647                                                                        & 31674                                                                       & 20301                                                                      & 17134                                                                       & 17156                                                                         \\
                                          & 1                                              & 24139                                                                       & 7624                                                                       & 6815                                                                        & 6840                                                                         & 24139                                                                       & 12927                                                                      & 10583                                                                       & 10614                                                                        & 24139                                                                       & 20301                                                                      & 14855                                                                       & 14886                                                                         \\
                                          & 2                                              & 16522                                                                       & 7624                                                                       & 6183                                                                        & 6221                                                                         & 16522                                                                       & 12927                                                                      & 9007                                                                        & 9049                                                                         & 16522                                                                       & 20301                                                                      & 11794                                                                       & 11828                                                                         \\
                                          & 4                                              & 10368                                                                       & 7624                                                                       & 5258                                                                        & 5307                                                                         & 10368                                                                       & 12927                                                                      & 7023                                                                        & 7067                                                                         & 10368                                                                       & 20301                                                                      & 8469                                                                        & 8498                                                                          \\ 
\cmidrule(lr){1-14}
\multirow{4}{*}{3000}                     & 0.5                                            & 31842                                                                       & 7624                                                                       & 7230                                                                        & 7255                                                                         & 31842                                                                       & 12927                                                                      & 11687                                                                       & 11725                                                                        & 31842                                                                       & 20301                                                                      & 17224                                                                       & 17265                                                                         \\
                                          & 1                                              & 24393                                                                       & 7624                                                                       & 6883                                                                        & 6927                                                                         & 24393                                                                       & 12927                                                                      & 10691                                                                       & 10751                                                                        & 24393                                                                       & 20301                                                                      & 15010                                                                       & 15068                                                                         \\
                                          & 2                                              & 16863                                                                       & 7624                                                                       & 6304                                                                        & 6372                                                                         & 16863                                                                       & 12927                                                                      & 9188                                                                        & 9267                                                                         & 16863                                                                       & 20301                                                                      & 12034                                                                       & 12100                                                                         \\
                                          & 4                                              & 10778                                                                       & 7624                                                                       & 5456                                                                        & 5543                                                                         & 10778                                                                       & 12927                                                                      & 7294                                                                        & 7375                                                                         & 10778                                                                       & 20301                                                                      & 8800                                                                        & 8855                                                                          \\ 
\cmidrule(r){1-14}
\multirow{4}{*}{4000}                     & 0.5                                            & 32011                                                                       & 7624                                                                       & 7266                                                                        & 7299                                                                         & 32011                                                                       & 12927                                                                      & 11748                                                                       & 11800                                                                        & 32011                                                                       & 20301                                                                      & 17314                                                                       & 17374                                                                         \\
                                          & 1                                              & 24648                                                                       & 7624                                                                       & 6951                                                                        & 7009                                                                         & 24648                                                                       & 12927                                                                      & 10799                                                                       & 10883                                                                        & 24648                                                                       & 20301                                                                      & 15164                                                                       & 15248                                                                         \\
                                          & 2                                              & 17203                                                                       & 7624                                                                       & 6424                                                                        & 6513                                                                         & 17203                                                                       & 12927                                                                      & 9368                                                                        & 9479                                                                         & 17203                                                                       & 20301                                                                      & 12274                                                                       & 12368                                                                         \\
                                          & 4                                              & 11189                                                                       & 7624                                                                       & 5653                                                                        & 5766                                                                         & 11189                                                                       & 12927                                                                      & 7565                                                                        & 7677                                                                         & 11189                                                                       & 20301                                                                      & 9132                                                                        & 9208                                                                          \\
\bottomrule
\end{tabular}
\caption{Long-run rate of cost varying $\lambda$, $\tau$ and $c_{\text{pm}}^{\text{uso}}$, while keeping all other parameters fixed for four policies.}
\label{tab:sensitivity}
\end{table}

In Table \ref{tab:sensitivity}, we observe, across all instances, that incorporating planned maintenance can significantly reduce costs compared to only corrective maintenance, which can be reduced even further by adding opportunistic maintenance. Intuitively, due to the cost structure, only planned maintenance at SOs can considerably improve the long-term rate of cost when compared to performing only opportunistic maintenance at USOs. Finally, if we compare $\pi_{\text{opt}}$ with $\pi'_{\text{opt}}$ we do not, despite the low value for $p$, observe significant differences. From an operational management perspective, this clearly implies that, if decision makers do not have any knowledge about the value of $p$ and given a similar cost structure as in the gearbox case, assuming perfect maintenance will result in a long-run rate of cost that is close to optimal regardless of the true value of $p$. This will be valid as long as the preventive maintenance cost (at both opportunities) is very small in comparison to the corrective maintenance cost, as is the case of the gearbox costs. As a rule of thumb, one can easily compute the expected number of maintenances (planned or opportunistic) required for a successful preventive maintenance and based on this compute the long-run rate of preventive maintenance cost (approximately in the order of $\max\{c_{\text{pm}}^{\text{so}},c_{\text{pm}}^{\text{uso}}\}/p$) and compare it with the corrective cost. If the corrective cost is significantly higher, then one may assume that there is no significant difference between $\pi_{\text{opt}}$ and $\pi'_{\text{opt}}$, and as a consequence there is no significant difference in the values of the optimal policies under the imperfect and perfect maintenance. In the next section, we investigate the savings that can be obtained by improving the performance of a repair when a decision maker has some knowledge regarding the value of $p$. 

\subsection{Influence of imperfect maintenance}
\label{subsec:influence}
Let $\pi^{(p)}_{\text{opt}}$ represent the optimal policy as a function of the successful preventive maintenance probability $p$ and let $C(\pi^{(p)}_{\text{opt}})$ denote the long-run rate of cost when the policy is $\pi^{(p)}_{\text{opt}}$. To demonstrate the effect of $p$ in the rate of cost, we compute the relative difference in the cost of not having a perfect preventive maintenance as a function of $p$. This relative difference is denoted by  $\delta(p)$ and it is equal to $$\delta(p) = \frac{C(\pi^{p}_{\text{opt}})-C(\pi^{1}_{\text{opt}})}{C(\pi^{1}_{\text{opt}})} \cdot 100 \%.$$ 
$\delta(p)$ indicates how much extra cost is incurred due to imperfect maintenance, and thus shows the benefit of improving the probability of executing a perfect maintenance. 

In this numerical example, similarly to before we choose $\mu_2=0.31$, and $\mu_1=0.31$. 
Furthermore, we set $\lambda = 4$ and $\tau = 1$. Figure \ref{fig:graph1} shows $\delta(p)$ for $p \in [0.5,1]$ under two different cost structures (denoted by $\delta(p)^1$ and $\delta(p)^2$, respectively). Figure \ref{fig:graph2} depicts the corresponding optimal values for $\tilde{t}$ for both cost structures, denoted by $t^1$ and $t^2$, respectively. We use the same cost structure as in the previous section, i.e. for $\delta(p)^1$, we consider $c_{\text{pm}}^{\text{so}} = 1000, c_{\text{pm}}^{\text{uso}}=2000$ and $c_{\text{cm}} = 300000$, whereas, for $\delta(p)^2$, we consider  $c_{\text{pm}}^{\text{so}} = 26500, c_{\text{pm}}^{\text{uso}}=28800$ and $c_{\text{cm}} = 75500$. The choice for the preventive maintenance cost at SOs and USOs in the second cost structure is common in the lithography industry (see \cite{zhu2017condition}). Based on Figure \ref{fig:graph1}, we can conclude that,  under both cost structures, significant costs can be saved by improving the probability of executing a perfect preventive maintenance (e.g., by training).

\begin{figure}[!htb]
\centering
  \includegraphics[width=0.7\linewidth]{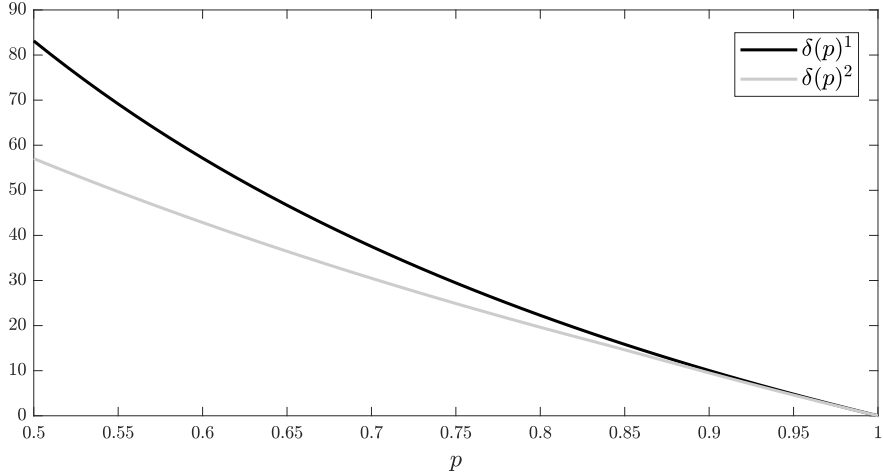}
  \caption{$\delta(p)^1$ and $\delta(p)^2$  for $ p \in [0.5,1]$ with $c_{\text{pm}}^{\text{so}} = 1000, c_{\text{pm}}^{\text{uso}}=2000$ and $c_{\text{cm}} = 300000$ for $\delta(p)^1$, and $c_{\text{pm}}^{\text{so}} = 26500, c_{\text{pm}}^{\text{uso}}=28800$ and $c_{\text{cm}} = 75500$ for $\delta(p)^2$.}
  \label{fig:graph1}
\end{figure}

The optimal policy $(\tilde{t})$, denoted by $t^{1}$ and $t^{2}$, under the first and second cost structure, respectively, is equal to $t^{1}\approx 0.08$ and $t^{2}\approx 0.39$ in case of perfect repairs. In Figure \ref{fig:graph2}, where we plot  $t^{1}$ and $t^{2}$ as a function of $p$, we observe the following regarding the influence of $p$ on the maintenance planning: If the preventive maintenance cost (at both opportunities) is very small compared to the cost of corrective maintenance, the order of the total preventive maintenance cost incurred until a successful preventive maintenance compared to the corrective maintenance cost is still maintained. Therefore, the maintenance planning does not alter that much regardless of the value of $p$, where the optimal policy is to almost always perform preventive maintenance at USOs for all values of $p\in [0.5,1]$. This also explains the small discrepancy between $\pi_{\text{opt}}$ and $\pi'_{\text{opt}}$ in Table \ref{tab:sensitivity}. This is different in the case of the second cost structure, where the maintenance planning changes substantially as a function of $p$. Whereas in the perfect case, the optimal policy is to perform preventive maintenance at a USO if the residual time until the next SO is larger than 0.39, for $p \lessapprox  0.83$, it is optimal to never perform preventive maintenance at a USO. Here, the order of the total preventive maintenance cost incurred until a successful preventive maintenance compared to the corrective maintenance cost is not maintained.

\begin{figure}[!htb]
\centering
  \includegraphics[width=0.7\linewidth]{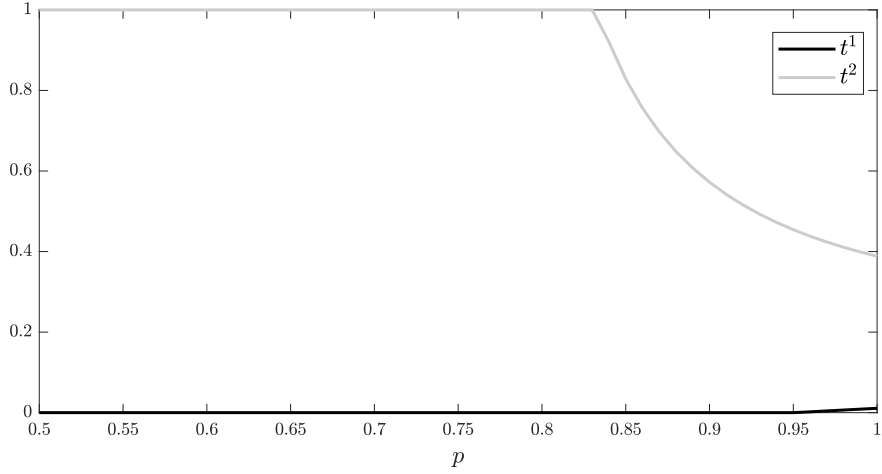}
  \caption{$t^{1}$ and ${t}^{2}$ for $ p \in [0.5,1]$ with $c_{\text{pm}}^{\text{so}} = 1000, c_{\text{pm}}^{\text{uso}}=2000$ and $c_{\text{cm}} = 300000$ for $t^1$, and $c_{\text{pm}}^{\text{so}} = 26500, c_{\text{pm}}^{\text{uso}}=28800$ and $c_{\text{cm}} = 75500$ for $t^2$.}
  \label{fig:graph2}
\end{figure}

Also in the opposite cost structure, i.e. $c_{\text{pm}}^{\text{uso}}<c_{\text{pm}}^{\text{so}}$ (similar examples can be found for $c_{\text{pm}}^{\text{uso}}=c_{\text{pm}}^{\text{so}}$), the maintenance planning can be influenced significantly by the imperfect repair probability. For instance, consider the setting with $\mu_1=1.1, \mu_2 =0.9$,  $c_{\text{pm}}^{\text{so}} = 4500$, $c_{\text{pm}}^{\text{uso}}=4000$, $c_{\text{cm}}=10000$, and $\lambda=0.5$. In  case of perfect repairs (i.e. $p=1$), the optimal policy is to perform preventive maintenance in state 1 at both SOs and USOs, and to do nothing otherwise (cf. Theorem \ref{prop:unequal}). However, if  $0.72 \lessapprox p \lessapprox 0.83$, the optimal policy is to only perform preventive maintenance at USOs and if $p \lessapprox 0.72$, then the optimal policy is to never perform PM. This example illustrates the influence of the imperfect repair probability on the maintenance planning. 
 
\subsection{Deferring of planned maintenance}
\label{sec:Num_Def_M}
In this section, we illustrate the change introduced by the action of deferring planned maintenance after the occurrence of a successful maintenance in three numerical examples that relate to the wind industry, the lithography industry, and to an artificially created example.

Figure \ref{fig:wind} shows the long-run rate of cost for both the deferral and no deferral case for the example with data stemming from the wind industry. Again, with regard to the cost parameters, we used $c_{\text{pm}}^{\text{so}} = 1000, c_{\text{pm}}^{\text{uso}}=2000$ and $c_{\text{cm}} = 300000$. With regard to the other parameters, we set $\lambda = 4$, $\tau = 1$, $\mu_1 = 0.31$, $\mu_2 = 0.31$ and $p=0.6$. We can observe that deferring the planned maintenance both significantly increases the long-run rate of cost under the optimal policy (an increase of 28.14\% from 8468.87 to 10852.15) and changes the value connected to the optimal policy, $\tilde{t}$ from 0.112 to 0.  

\begin{figure}[!ht]
\centering
  \includegraphics[width=0.5\linewidth]{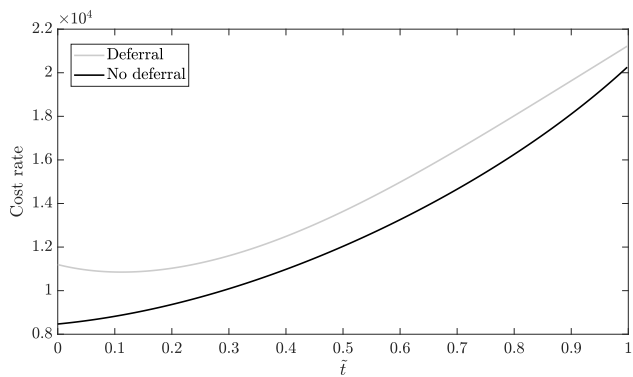}
  \caption{Cost rate in case of deferral and of no deferral for wind industry example. Optimal $\tilde{t}$ is equal to 0.112 and 0 for deferral and no deferral, respectively.}
  \label{fig:wind}
\end{figure}

Figure \ref{fig:lithDef} and Figure \ref{fig:LithNoDef} depict the long-run rate of cost for both the deferral and the no deferral case, respectively, based on the values of the lithography industry example. We use the  same cost parameters as in Section \ref{subsec:influence}, that is $c_{\text{pm}}^{\text{so}} = 26500, c_{\text{pm}}^{\text{uso}}=28800$ and $c_{\text{cm}} = 75500$. The other parameters remain unchanged, i.e. $\lambda = 4$, $\tau = 1$, $\mu_1 = 0.31$, $\mu_2 = 0.31$ and $p=0.6$. Again, we observe the same influence of deferring the planned maintenance on both the long-run rate of cost under the optimal policy  (an increase of 6533.3 \% from 12840.12 to 851727.53) and on the value of $\tilde{t}$ associated with the optimal policy (from 1 to 0.175) , similarly to the numerical example for the wind industry. The drastic increase is due to the cost structure, and more explicitly, it is due to the preventive maintenance costs values (both at scheduled and unscheduled opportunities), which are relatively much closer to the corrective maintenance cost in comparison to the wind industry example.  

\begin{figure}[!ht]
    \centering
    \begin{subfigure}[b]{0.46\textwidth}
\centering
  \includegraphics[width=1\linewidth]{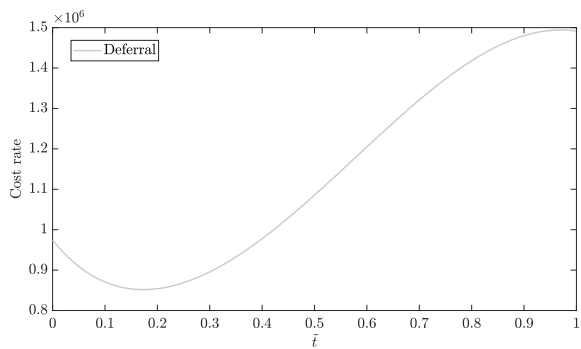}
  \caption{Cost rate in case of deferral for lithography industry example. Optimal $\tilde{t}$ is equal to 0.175.}
  \label{fig:lithDef}
    \end{subfigure}
    ~ 
    ~ 
    \begin{subfigure}[b]{0.46\textwidth}
\centering
  \includegraphics[width=1\linewidth]{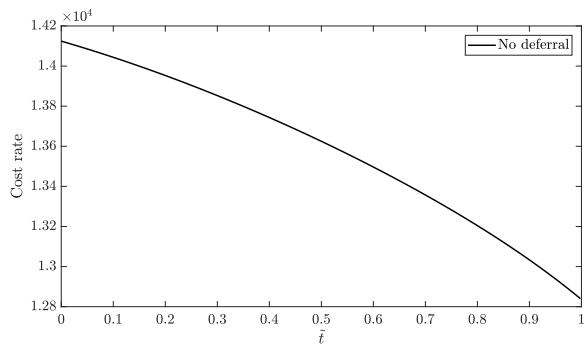}
  \caption{Cost rate in case of no deferral for lithography industry example. Optimal $\tilde{t}$ is equal to 1.}
  \label{fig:LithNoDef}
    \end{subfigure}
    \caption{Cost rate for lithography industry example.}\label{fig:lith}
\end{figure}

To illustrate that the opposite effect (albeit to a much lesser degree than in the previous two examples) can also hold, we create an artificial example where we set $c_{\text{pm}}^{\text{so}} = 5000, c_{\text{pm}}^{\text{uso}}=10000$ and $c_{\text{cm}} = 19000$, and $\lambda = 4$,$\tau = 4$,$\mu_1 = 1$, $\mu_2 = 0.4$ and $p=0.5$. Figure \ref{fig:toy} depicts the long-run rate of cost for both the deferral and the no deferral case for this example. Here we observe that for all values of $\tilde{t}$, cost savings can be obtained by deferring planned maintenance after the occurrence of a successful opportunistic maintenance. More specifically, whereas the optimal value of $\tilde{t}$ is equal to 1 for both cases, the long-run rate of cost under the optimal policy decreases with 0.88\% from 6458.97 to 6402.44, when deferring planned maintenance.

\begin{figure}[!ht]
\centering
  \includegraphics[width=0.5\linewidth]{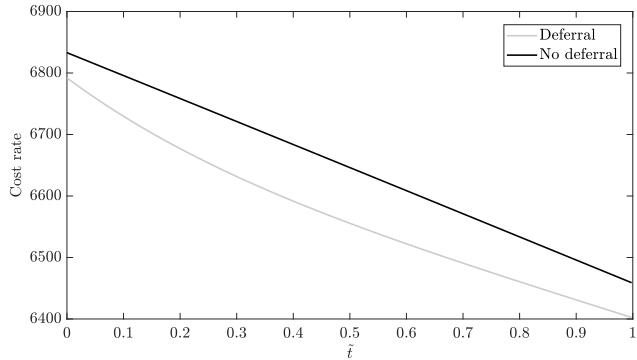}
  \caption{Cost rate in case of deferral and no deferral for artificial example. Optimal $\tilde{t}$ is equal to 1 for both deferral and no deferral.}
  \label{fig:toy}
\end{figure}

\section{Conclusion}
\label{sec:conclusion}
In this paper, we considered the maintenance policy for a 3-state component degrading over time with corrective replacements at failures and preventive replacements at both scheduled and unscheduled opportunities under imperfect repair. By formulating this problem as a semi-Markov decision process, we were able to characterize the structure of the optimal maintenance policy as a control limit policy, where the control limit depends on the time until the next planned maintenance opportunity. Using this approach, a closed-form expression for the optimal control limit was derived. Within this class of control limit policies, we derived, using the theory of regenerative processes, an explicit expression for the long-run rate of cost. Using a similar approach based on renewal theory, we derived an expression for the long-run rate of cost in the case when planned maintenance is deferred after the occurrence of a successful opportunistic maintenance.

A cost comparison with other suboptimal policies has been examined, which illustrated the benefits of optimizing the maintenance policy. Specifically, it was found that incorporating planned maintenance can significantly reduce costs compared to only corrective maintenance, which can be reduced even further by adding opportunistic maintenance. Moreover, numerical results indicate that the extent of the impact of the perfect repair probability on the optimal policy depends on the underlying cost structure. It was also shown that substantial cost savings can be obtained by improving the perfect repair probability. Finally, our numerical examples indicate that the deferral of planned maintenance after the occurrence of a successful opportunistic maintenance may impact the total cost in both a negative and positive way.

There are a number of extensions and topics for future research. The most important direction is to consider the network dependency on the level of the structural degradation and failure dependencies, i.e. to consider a multi-dimensional process that captures the degradation of the various assets in the network. Such a future direction would be particularly interesting in the case of a small number of assets for which  the Poisson approximation for the opportunistic maintenance may not be accurate. In addition, another very interesting research direction would be to consider a more general model in which the condition of the system degrades through $N>2$ states. Next, in this analysis, we have assumed that the condition of the system is fully observable. However, in many real applications, condition monitoring data such as spectrometric oil data or vibration data gives only partial information about the underlying state of the system. From this perspective, it would be interesting to extend the model at hand to a partially observable model in which the condition monitoring data are stochastically related to the true system state. Finally, the results in this paper are valid for systems with hypo-exponentially distributed lifetimes. Future research could relax this assumption by considering a phase-type  lifetime distribution. 

\section*{Acknowledgments}
\noindent The authors gratefully acknowledge the contribution of S. Kalosi in the early stages of the preparation of the work. The authors would like to thank M. Barbieri, J. Korst, and V. Pronk (all Philips Research), and O. J. Boxma and G. J. van Houtum (both Eindhoven University of Technology) for their time and advice in the preparation of this work. 
The work of C. Drent is supported by the Data Science Flagship framework, a cooperation between the Eindhoven University of Technology and Philips. The work of S. Kapodistria is supported by the NWO Gravitation Project ``NETWORKS'' of the Dutch government.

\appendix
\section{Optimality equations for semi-Markov decision process}
\label{sec:appendixmdp}
We consider the so-called ratio-average cost for a controlled semi-Markov decision process, which corresponds to the limes superior of the expected total cost over a finite number of jumps divided by the expected cumulative time of these jumps, see \cite{ross1970average,Feinberg_1994,Schal_1992}, for instance.

We shall use  here the definition of a  controlled semi-Markov decision process  from \cite{lippman1975dynamic,yushkevich1982semi,Jaskiewicz2004}. A controlled semi-Markov decision process  is specified by five objects: a Borel state space $\mathcal{S}$, a Borel
action space $\mathcal{A}$, a law of motion -- a measurable projection determining the state as a function of an action, a transition function (transition law) $\mathcal{P}$ -- a probability measure depending measurably on the state and the action, and a  reward (or cost) function $c $. 

The process is observed at time $t = 0$ to be in some state $x_0\in\mathcal{S}$. At that time an action $a_0\in\mathcal{A}_{x_0}$ is chosen, where $\mathcal{A}_{x_0}$ is a compact set of actions available in state $x_0$. The set of all actions is $\mathcal{A}$ and is also assumed to be a Borel state space.

For the problem at hand, the state space is 
$$\mathcal{S}=\left\{(i,j,t):\ i\in \{1,2\}, \ j\in\{\text{SC},\text{USO}\},\ t\in(0,\tau)\right\}\cup \left\{(i,\text{SO},0):\ i\in \{1,2\}\}\right\}$$ and the action space is $\mathcal{A} =\{\text{perform PM, do nothing, perform CM}\}$,  cf. Section \ref{Sec:AverageCostCriterion}. 

If the current state is $x_0$ and action $a_0$ is selected, then the immediate cost $c(x_0;a_0)$ is
incurred, and the system remains in state $x_0 $ for a random time $t_1$, with the cumulative distribution depending only on $x_0$ and $a_0$. Afterward, the system jumps to the state $x_1 $ according to the probability measure (transition law) $\mathcal{P}( \cdot\,|\, x_0, a_0,t_1)$. This procedure yields a trajectory $(x_0,a_0,t_1,x_1,a_1,t_2,\ldots)$ of some stochastic process, where $x_n$ is the state, $a_n$ is the control variable and $t_n$ is the time of the $n$-th transition, $n=0,1,\ldots$. In the sequel, we shall refer to the corresponding random variables by means of their capital letters: $T_n$ -- the random time of $n$-th transition, for $n=1,2,\ldots$ with $T_0:=0$, $X_n$ -- the state at time $T_n$, and $A_n$ -- the action at time $T_n$.

Let $H_n$ be the space of admissible histories up to the $n$-th transition, $H_n:=(\mathcal{S}\times\mathcal{A}\times [0,\infty))^n\times \mathcal{S}$ and $H_0:=\mathcal{S}$. An element $h_n$ of $H_n$ is called a partial history of the process and is of the form $h_n=(x_0,a_0, t_1, \ldots, x_{n-1},a_{n-1}, t_n, x_n)$. A control policy (or policy) is a sequence $\{\pi_n\}$, where each $\pi_n$ is a conditional probability $\pi_n(\cdot\,|\, h_n)$ on the control set $\mathcal{A}_{x_n}$, given the entire history $h_n$ such that $\pi_n(\mathcal{A}_{x_n}\,|\, h_n)=1$, for all $h_n$, $n=1,2,\ldots$.
The class of all policies is denoted by $\Pi$ and let $\Pi_{DS}$ denote the class of all deterministic stationary policies.

For each initial state $x_0\in\mathcal{S}$ and for each policy $\pi\in\Pi$, there exists a  unique probability measure $\mathbb{P}^\pi_{x_0}$ such that 
\begin{align*}
\mathbb{P}^\pi_{x_0}[A_n\in A\,|\, h_n]&=\pi_n[A\,|\, h_n],\text{ for a Borel set }A\subset\mathcal{A},\\
\mathbb{P}^\pi_{x_0}[T_{n+1}-T_{n}\in S, X_{n+1}\in X\,|\, h_n,a_n]&=\mathcal{P}_{x_n}^{a_n}[S,X],\text{ for Borel sets }X\subset\mathcal{S} \text{ and } S\subset\mathbb{R},\\
\mathbb{P}^\pi_{x_0}[X_{n+1}\in X\,|\, h_n,a_n, T_{n+1}-T_n=s]&=\mathcal{P}_{x_n}^{a_n}[X\, |\, s],\text{ for a Borel set }X\subset\mathcal{S},\\
\mathbb{P}^\pi_{x_0}[T_{n+1}-T_{n}\leq s \,|\, h_n,a_n]&=F^{a_n}_{x_n}(s), s\in\mathbb{R}.
\end{align*}
Further, let $\tau(x,a)$ denote the conditional mean sojourn (holding) time spent in state $x$ under action $a$, i.e.
\[\tau(x;a):=\int_{0}^\infty s\mathrm{d} F_x^a(s),\]
and let 
$\tilde{F}_{x}^a(\alpha)$ denote the Laplace-Stieltjes transform of the sojourn time spent in state $x$ under action $a$, i.e.
\[\tilde{F}_{x}^a(\alpha):=\int_{0}^\infty e^{-\alpha s}\mathrm{d} F_x^a(s).\]

For the problem at hand, the cost function is defined as follows
\begin{align*}
c(x;a)=\begin{cases}
0,&\text{ if } x\in\mathcal{S}, a=\{\text{do nothing}\},\\
c_{\text{cm}},&\text{ if } x=(1,\text{SC},t),t\in(0,\tau),a=\{\text{perform CM}\},\\
c_{\text{pm}}^{\text{uso}},&\text{ if } x=(i,\text{USO},t), i=1,2,t\in(0,\tau), a=\{\text{perform PM}\},\\
c_{\text{pm}}^{\text{so}},&\text{ if } x=(i,\text{SO},0), i=1,2,a=\{\text{perform PM}\}.\\
\end{cases}
\end{align*}

Let $\mathcal{P}_{x_n}^{a_n}(s,x_{n+1})$ denote the joint density/mass distribution of the  transition time  $T_{n+1}-T_n$ and the allowed next state $X_{n+1}$, given the current state $X_n=x_n$ and the allowed action $a_n$. For $x_n=(1,\text{SC},t)$, $t\in(0,\tau)$, and $a_n=\{\text{perform CM}\}$,
\begin{align*}
\mathcal{P}_{x_n}^{a_n}\left(s,(2,\text{SC},t-s)\right)
&=\frac{\mu_2}{\lambda+\mu_2}(\lambda+\mu_2)e^{-(\lambda+\mu_2)s}=\mu_2\, e^{-(\lambda+\mu_2)s},\, s\in[0,t)\\
\mathcal{P}_{x_n}^{a_n}\left(s,(2,\text{USO},t-s)\right)
&= \lambda \, e^{-(\lambda+\mu_2)s},\, s\in[0,t)\\
\mathcal{P}_{x_n}^{a_n}\left(t,(2,\text{SO},0)\right)
&=e^{-(\lambda+\mu_2)t}.
\end{align*}
For the derivation of the above probabilities, it suffices to note that there are three possible evolutions in terms of the state of the system: either an SO or an SC or  a USO, where the time till the  SO is equal to $t$, while the times till the next SC and the USO are exponentially distributed with rates $\mu_2$ and $\lambda$, respectively. The probabilities for $x_n=(2,\text{USO},t)$  and $a_n=\{\text{do nothing}\}$  or $a_n=\{\text{perform PM}\}$ are identical. 
The remaining probabilities are obtained using very similar arguments. 

From the joint distributions, the marginal cumulative distribution of the transition time $T_{n+1}-T_n$ can be immediately derived as follows, for $x_n =(1,\text{SC},t)$ and $a_n=\{\text{perform CM}\}$,
\begin{align*}
F_{x_n}^{a_n}(s)&=1-e^{-(\lambda+\mu_2)s},\, s\in(0,t),\\
F_{x_n}^{a_n}(s)&=1,\, s\geq t.
\end{align*} 
The distribution of the transition time from state $x_n=(2,\text{USO},t)$  under actions $a_n=\{\text{do nothing}\}$  or $a_n=\{\text{perform PM}\}$ are identical. 
The rest of the marginal cumulative distributions for the other states and actions follow analogously.

Having fully defined the probabilities for the problem at hand, we proceed in providing, following the proofs in \cite{Bhattacharya}, the proposition below that guarantees that (1) a dynamic programming equation holds for the optimal reward  (this equation is typically referred to as the average optimality equality or as the Bellman equation), and (2)  a deterministic stationary policy (optimal for long-run average reward) is provided by this equation.
\begin{proposition}\label{prop:ACOI-full}
For the model at hand, there exist a bounded function  $V(\cdot)$ and a constant $g$ such that
\begin{align}
V(x)&=\min_{a\in \mathcal{A}_x} \Big \{ c(x;a)+\int_yV(y)\mathcal{P}_x^a(\mathrm{d}y)-g \,\tau(x;a) \Big\}, \, \forall x\in \mathcal{S}.\label{Eq:Bellman_Gen}
\end{align}
Moreover, the deterministic stationary policy $\pi^{*(\infty)}\in\Pi_{DS}$ is optimal for the ratio-average cost criterion with 
\begin{align*}
g=\inf_{\pi\in\Pi_{DS}} J(x,\pi): = J^*(x)
\end{align*}
where
\begin{align}
J(x,\pi):=\lim\sup_{n\to\infty} \frac{\mathbb{E}_x^\pi \left[\sum_{k=0}^{n-1}c(X_{k},A_{k})\right]}{\mathbb{E}_x^\pi \left[T_n\right]}\equiv \lim\sup_{n\to\infty} \frac{\mathbb{E}_x^\pi \left[\sum_{k=0}^{n-1}c(X_{k},A_{k})\right]}{\mathbb{E}_x^\pi \left[\sum_{k=0}^{n-1}\tau(X_{k},A_{k})\right]}, \, \pi\in \Pi_{DS}.\label{Eq:AverageCostEq}
\end{align}
\end{proposition}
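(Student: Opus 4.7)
The plan is to invoke the classical average-cost optimality framework for semi-Markov decision processes with Borel state space, following the line of Bhattacharya and Majumdar (1989), Schäl (1992), Vega-Amaya and Luque-Vásquez (2000), Jaskiewicz (2001, 2004), and Hernández-Lerma and Lasserre (2012). The aim is to verify the standard hypotheses that yield (i) existence of a bounded pair $(V,g)$ solving the Bellman equation \eqref{Eq:Bellman_Gen} and (ii) existence of a measurable minimizing selector, which the usual verification argument then identifies with an optimal deterministic stationary policy achieving $g=J^*(x)$.

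First I would verify the regularity conditions. The state space $\mathcal{S}$, being a finite disjoint union of intervals, is Borel; the action space $\mathcal{A}$ is finite (hence compact); the admissible action sets $\mathcal{A}_x$ are nonempty; and the one-step cost satisfies $0\le c(x;a)\le c_{\text{cm}}$ uniformly. The mean holding times obey $0<\tau(x;a)\le \tau$, with the upper bound from the residual SO coordinate and a uniform positive lower bound obtained by direct inspection of the finitely many state-action types (each sojourn distribution is either a deterministic residual time or a truncated exponential with bounded rate). Measurability of the admissible action correspondence and weak continuity of the transition kernel $\mathcal{P}_x^a$ in $(x,a)$ follow from the closed-form densities given for $\mathcal{P}_{x_n}^{a_n}(s,\cdot)$.

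Next I would establish the minorization/uniform ergodicity needed to bound the span of the discounted value function. The key structural observation is that the SOs $\tau,2\tau,\ldots$ act as simultaneous regeneration-like points under every policy: the imperfect maintenance rule assigns probability $p>0$ of being returned to state 2 after any PM or CM, and the finite horizon of length $\tau$ between two consecutive SOs combined with $\tau(x;a)\le\tau$ enforces a common ``small set'' argument yielding a Doeblin-type minorization uniformly over $\pi\in\Pi_{DS}$. With this in hand, the vanishing-discount procedure applies: for each $\beta\in(0,1)$ the discounted optimality equation admits a bounded solution $V_\beta$ by the standard contraction argument with bounded costs; setting $m_\beta(x):=V_\beta(x)-V_\beta(x_0)$ for a reference state $x_0$, the ergodicity bound forces $\{m_\beta\}$ and $\{(1-\beta)V_\beta(x_0)\}$ to be uniformly bounded, and any cluster point as $\beta\uparrow 1$ delivers the desired $(V,g)$ satisfying \eqref{Eq:Bellman_Gen}. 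Compactness of $\mathcal{A}_x$ and the weak continuity above supply a measurable minimizing selector through Berge's maximum theorem, giving the stationary optimal policy $\pi^{*(\infty)}$; the identification $g=J^*(x)$ follows by the standard Lyapunov/verification computation applied to \eqref{Eq:AverageCostEq}.

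The main obstacle is the uniform ergodicity step: the residual-time coordinate $t\in(0,\tau)$ is continuous and links the SO and USO epochs, so translating the intuitive ``SO acts as regeneration'' observation into the technical form of the simultaneous Doeblin condition (Assumption (W) or (B) of the cited references) requires some care. The helpful features are that the $(p,q)$-rule prevents degeneracy at every maintenance epoch and that $\tau$ serves as a deterministic upper bound on the inter-decision time, so once the minorization constant is produced explicitly, the remaining steps are standard and the closed-form transition densities make the verification of all measurability and continuity hypotheses essentially mechanical.
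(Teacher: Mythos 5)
Your overall architecture (vanishing discount, uniform bound on the span of the discounted value function, measurable minimizing selector, verification argument) is the same as the paper's, but two of the concrete hypotheses you claim to have verified are false for this model, and both matter. First, there is \emph{no} uniform positive lower bound on the mean holding times: in a state $(i,j,t)$ the sojourn is the minimum of the deterministic residual time $t$ and exponential clocks, so for instance $\tau((2,\text{SC},t);a)=\int_0^t e^{-(\mu_1+\lambda)s}\,\mathrm{d}s\to 0$ as $t\to 0^+$. This is precisely the hypothesis the classical SMDP theorems need (directly, or through a surrogate), and it also invalidates the ``standard contraction argument'' for the discounted equation, since $\sup_{x,a}\tilde{F}_{x}^{a}(\alpha)=1$. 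The paper substitutes two things for it: it bounds $V_\alpha$ from above by the value of the do-nothing policy (Step 1), and for the ratio criterion it lower-bounds $\mathbb{E}_x^\pi[T_N]$ linearly in $N$ via the bounded expected number of decision epochs per SO-interval, $1+\lambda\tau+\frac{\mu_1\mu_2}{\mu_1+\mu_2}\tau$ (Step 5). You need one of these replacements. Second, your discounting scheme is the discrete-time one: $\beta^k$ per transition with $(1-\beta)V_\beta(x_0)\to g$ yields the average cost \emph{per decision epoch} and an optimality equation without the factor $\tau(x;a)$, so it cannot produce \eqref{Eq:Bellman_Gen}. The ratio (cost per unit time) criterion requires the SMDP discounting $\mathbb{E}_x^\pi[\sum_k e^{-\alpha T_k}c(X_k,A_k)]$ with $\alpha\to 0^+$ and $g=\limsup_{\alpha\to 0^+}\alpha V_\alpha(x)$; the term $-g\,\tau(x;a)$ arises from the expansion $\tilde{F}_{x}^{a}(\alpha)=1-\alpha\,\tau(x;a)+o(\alpha)$, which is how the paper proceeds.

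On the ergodicity step you correctly identify the structural fact but explicitly leave it at ``requires some care''; this is the part that actually has to be proved, and it is the heart of the paper's argument (Step 3). The paper does it without any Doeblin minorization: from any $x=(i,j,t)$ the process reaches an SO after exactly time $t$ with probability one, the chain observed at SO epochs is a two-state irreducible aperiodic Markov chain under any deterministic stationary policy, hence the expected first-passage time $\tau_{x,1}$ to $(1,\text{SO},0)$ is finite, and the span is bounded by the expected cost accumulated over that passage, giving $|V_\alpha(x)-V_\alpha(z)|\le c_{\text{cm}}\left(2+\left(\lambda+\mu_1+\mu_2+1+\frac{\mu_1\mu_2}{\mu_1+\mu_2}\right)(\tau_{x,1}+\tau_{z,1})\right)$ uniformly in $\alpha$. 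Note also that your regeneration heuristic (``probability $p>0$ of being returned to state $2$ after any PM'') is not what drives recurrence: under the do-nothing policy no PM ever occurs, and what guarantees irreducibility of the embedded SO-chain is that CM restores state $2$ with probability one.
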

\begin{proof}
The proof of the proposition relies on the fact that the costs $c(x;a)$ are non-negative and upper bounded by $c_{\text{cm}}$. We follow here the ideas presented in \cite{Bhattacharya} and in Theorems 10.3.1 \& 10.3.6 in \cite[Sections 10.4 and 10.5]{Hernandez_2012}. Following the ideas therein, we consider the corresponding $\alpha$-discounted cost criterion 
\begin{align*}
V_\alpha(x,\pi)=\mathbb{E}_x^\pi \left[\sum_{k=0}^{\infty}e^{-\alpha T_k}c(X_{k},A_{k})\right]
\end{align*}
and $V_\alpha(x)=\inf_{\pi\in\Pi}V_\alpha(x,\pi)$. The main steps in the proof of the proposition are 
\begin{description}

\item[Step 1:] Show that the optimal reward $V_\alpha (x)$ under discounting is
continuous and bounded. The latter follows easily by noting that $V_\alpha (x)$ is bounded by $V_\alpha(\pi_{\text{DN}},x)$, where $\pi_{\text{DN}}$ denotes the policy of doing nothing at all opportunities, unless the component fails, in which case it is mandatory to do corrective maintenance. This yields
\begin{align*}
V_\alpha(x)\leq c_{\text{cm}}\frac{\frac{\mu_1}{\mu_1+\alpha}}{1-\frac{\mu_1}{\mu_1+\alpha}\frac{\mu_2}{\mu_2+\alpha}},\ \forall x\in\mathcal{S}. 
\end{align*}
Analogously, 
$$g\equiv J^*(x)\leq c_{\text{cm}}\frac{\mu_1\mu_2}{\mu_2+\mu_1}.$$
See Appendix \ref{Proof of Lemma 10.4.1} for further details.

\item[Step 2:] Show that the discounted Bellman equation  
\begin{align}\label{Eq:Bellman_Gen_Discounted}
V_\alpha(x)&=\min_{a\in \mathcal{A}_x} \Big \{ c(x;a)+\int_{s} \int_{y} e^{-\alpha s}V(y)\mathcal{P}^a_x (\mathrm{d}y\,|\,s)\,\mathrm{d}F_x^a(s)\Big\}, x\in \mathcal{S},
\end{align}
holds. Also,
there exists a Borel measurable function that minimizes the right side of the discounted Bellman equation for every $x\in\mathcal{S}$. The deterministic stationary policy is optimal under discounting. The proof follows verbatim the steps in \cite[Theorem 3.1 on page 227]{Bhattacharya}.

\item[Step 3:] Choose $z\in\mathcal{S}$, then for all  $x\in\mathcal{S}$, show that $|V_\alpha(x)-V_\alpha(z)|$ is bounded for all $\alpha>0$. This follows oftentimes by the geometric ergodicity of the underlying Markov controlled model. In the case under consideration, this is proven by noting that from all states $x=(i,j,t) \in\mathcal{S}$, after time $t$ the system is in an SO state with probability 1. This yields
\begin{align*}
|V_\alpha(x)-V_\alpha(z)|&\leq
 c_{\text{cm}}\left(2+\left(\lambda +\mu_1+\mu_2+1+\frac{\mu_1\mu_2}{\mu_1+\mu_2}\right)(\tau_{x,1}+\tau_{z,1})\right), 
\end{align*}
with  $\tau_{x,1}<\infty$ denoting the expectation of the first passage time from state $x\in\mathcal{S}$ to state $(1,\text{SO},0)$. 
See Appendix \ref{Proof of Lemma 10.4.2} for further details. A consequence of the above finding is that, for all deterministic stationary policies $\pi\in\Pi_{DS}$, the expected average cost in  \eqref{Eq:AverageCostEq} is independent of $x$.

\item[Step 4:] Show that there exists a solution say $g$ to the average optimality equality \eqref{Eq:Bellman_Gen}. There exists a Borel measurable function $\pi^ *$ on $\mathcal{S}$ into $\mathcal{A}$ such that the maximum on the right side of \eqref{Eq:Bellman_Gen} is attained at $\pi^*(x)$, $x\in\mathcal{S}$. 
The proof follows verbatim the steps in \cite[Theorem 3.2 (a) \& (b) on page 228]{Bhattacharya}.

\item[Step 5:] Show that the stationary policy $\pi^{*(\infty)}$ is optimal for the long-run average reward and $g$ is
the optimal reward, with $g=\limsup_{\alpha\to0^+}\alpha V_\alpha(x)$.
See Appendix \ref{Proof of Lemma 10.4.3} for further details. 
\end{description}
\end{proof}

Equivalent propositions (based on different methods, but more importantly based on different assumptions regarding the geometric ergodicity) can be found for example in \cite{Jaskiewicz2001,Vega_2000,Jaskiewicz2004}. 

\subsection{Proof of Step 1} \label{Proof of Lemma 10.4.1}
Under the policy of doing nothing at all opportunities, unless the component fails in which case it is mandatory to do corrective maintenance, say $\pi_{\text{DN}}$, $V_\alpha(\pi_{\text{DN}},x)$ can be computed using first step analysis. Note that under this policy, it is not required to keep track of the remaining time to the next SO opportunity. Say $x=(i,j,\cdot)$.  If $i=1$, then after an exponentially distributed time with rate $\mu_1$, say $T_{\mu_1}$, the component will fail and a cost $c_{\text{cm}}$ will be incurred. If  $i=2$, then after a Hypo-exponentially distributed time with rates $(\mu_2,\mu_1)$, say $T_{\mu_1}+T_{\mu_2}$ (the two random times are independent), the component will fail and a cost $c_{\text{cm}}$ will be incurred. All in all, 
\begin{align}
V_\alpha(x,\pi_{\text{DN}})&=\mathbb{E}[e^{-\alpha(T_{\mu_1}+T_{\mu_2}\i_{\{i=2\}})}]\left(c_{\text{cm}}+\mathbb{E}[V_\alpha((1,\text{SC},\cdot),\pi_{\text{DN}})]\right).\label{Eq:V_Bound1}
\end{align}
Similarly,
\begin{align*}
V_\alpha((1,\text{SC},\cdot),\pi_{\text{DN}})&=\mathbb{E}[e^{-\alpha(T_{\mu_1}+T_{\mu_2})}]\left(c_{\text{cm}}+\mathbb{E}[V_\alpha((1,\text{SC},\cdot),\pi_{\text{DN}})]\right),
\end{align*}
which yields upon solving for $V_\alpha((1,\text{SC},\cdot),\pi_{\text{DN}})$ and substituting that $\mathbb{E}[e^{-\alpha T_{\mu_i}}]=\frac{\mu_i}{\mu_i+\alpha}$, $i=1,2$,
\begin{align*}
V_\alpha((1,\text{SC},\cdot),\pi_{\text{DN}})&=c_{\text{cm}}\frac{\frac{\mu_1}{\mu_1+\alpha}\frac{\mu_2}{\mu_2+\alpha}}{1-\frac{\mu_1}{\mu_1+\alpha}\frac{\mu_2}{\mu_2+\alpha}}. 
\end{align*}
Combining the last equation with \eqref{Eq:V_Bound1} yields
\begin{align*}
V_\alpha(x,\pi_{\text{DN}})&=c_{\text{cm}}\frac{\frac{\mu_1}{\mu_1+\alpha} \left(
\frac{\mu_2}{\mu_2+\alpha}\i_{\{i=2\}}+\i_{\{i\neq2\}}
\right)
}{1-\frac{\mu_1}{\mu_1+\alpha}\frac{\mu_2}{\mu_2+\alpha}}\leq 
c_{\text{cm}}\frac{\frac{\mu_1}{\mu_1+\alpha}}{1-\frac{\mu_1}{\mu_1+\alpha}\frac{\mu_2}{\mu_2+\alpha}}. 
\end{align*}
The proof for the long-run average cost follows by employing a simple renewal argument.

\subsection{Proof of Step 3} \label{Proof of Lemma 10.4.2}
Choose $x=(i,j,t) \in\mathcal{S}$. Let $T_{x,1}$ denote the first passage time from state $x$ to state $(1,\text{SO},0)$, and $\tilde{F}_{x,1}(\alpha)=\mathbb{E}[e^{-\alpha T_{x,1}}]$ and $\tau_{x,1}=\mathbb{E}[T_{x,1}]$. 

Starting from state $x$, after time $t\in[0,\tau)$, the system is in an SO state with probability 1. More concretely, under the optimal policy (which is deterministic stationary), say $\pi^{(\infty)}_\alpha$, starting in state $x=(i,j,t)$, it will end up in state $(1,\text{SO},0)$ after time $t$ with probability $p_x$, and in state $(2,\text{SO},0)$ with probability $1-p_x$. In case state $x$ coincides with an SO state then $p_x=0$ or $p_x=1$. Once in an SO state, the system state observed at only the SO epochs behaves like a discrete time (irreducible and aperiodic) Markov chain with only states $(1,\text{SO},0)$ and $(2,\text{SO},0)$. Thus, $\tau_{x,1}=\mathbb{E}[T_{x,1}]=\lim_{\alpha\to0^+}\frac{1-\tilde{F}_{x,1}(\alpha)}{\alpha}<\infty$.

From the above
\begin{align*}
V_\alpha(x)&=\mathbb{E}_{x}^{\pi^{(\infty)}_\alpha}[\alpha\text{-cost from $x$ to state $(1,\text{SO},0)$ in $T_{x,1}$}] +\mathbb{E}_{x}^{\pi_\alpha^{(\infty)}}[e^{-\alpha T_{x,1}}]V_\alpha(1,\text{SO},0).
\end{align*}
Note that  $\mathbb{E}_{x}^{\pi^{(\infty)}_\alpha}[\alpha\text{-cost from $x$ to state $(1,\text{SO},0)$ in $T_{x,1}$}]$ is equal to: (1) the  expected discounted cost incurred directly in state $x$, which is upper bounded by $c_{\text{cm}}$, (2) the total expected discounted cost of all the SOs that occur in time $T_{x,1}$, which is upper bounded by $c_{\text{cm}} \tau_{x,1}$, (3) the total expected discounted cost of all the USOs that occur in time $T_{x,1}$, which is upper bounded by $c_{\text{cm}}\lambda \tau_{x,1}$, and (4) the total expected discounted cost of all the corrective maintenance opportunities that occur in time $T_{x,1}$, which is upper bounded by $c_{\text{cm}}(\mu_1+\mu_2) \tau_{x,1}$. All in all, 
\begin{align*}
\mathbb{E}_{x}^{\pi^{(\infty)}_\alpha}[\alpha\text{-cost from $x$ to state $(1,\text{SO},0)$ in $T_{x,1}$}]
\leq  c_{\text{cm}}(1+\tau_{x,1} + (\lambda +\mu_1+\mu_2)\tau_{x,1}).
\end{align*}
Then, straightforward computations yield
\begin{align*}
\left| V_\alpha(x)-V_\alpha(1,\text{SO},0)\right|
&= \Big|
\mathbb{E}_{x}^{\pi^{(\infty)}_\alpha}[\alpha\text{-cost from $x$ to state $(1,\text{SO},0)$ in $T_{x,1}$}]\\
&\qquad+
\mathbb{E}_{x}^{\pi_\alpha^{(\infty)}}[e^{-\alpha T_{x,1}}]V_\alpha(1,\text{SO},0)
-V_\alpha(1,\text{SO},0) \Big|\\
&\leq 
\mathbb{E}_{x}^{\pi^{(\infty)}_\alpha}[\alpha\text{-cost from $x$ to state $(1,\text{SO},0)$ in $T_{x,1}$}] +
\left| 1-\mathbb{E}_{x}^{\pi_\alpha^{(\infty)}}[e^{-\alpha T_{x,1}}]\right|V_\alpha(1,\text{SO},0)\\
&\leq c_{\text{cm}}(1+\tau_{x,1}+(\lambda +\mu_1+\mu_2)\tau_{x,1})+
\left( 1-\mathbb{E}_{x}^{\pi_\alpha^{(\infty)}}[e^{-\alpha T_{x,1}}]\right)V_\alpha(1,\text{SO},0).
\end{align*}
Similarly, for $z=(i',j',t')\in\mathcal{S}$,
\begin{align*}
\left| V_\alpha(z)- V_\alpha(1,\text{SO},0) ]\right|
&\leq c_{\text{cm}}(1+\tau_{x,1}+(\lambda +\mu_1+\mu_2)\tau_{z,1})+
\left( 1-\mathbb{E}_{z}^{\pi_\alpha^{(\infty)}}[e^{-\alpha T_{z,1}}]\right)V_\alpha(1,\text{SO},0).
\end{align*}
Then, 
\begin{align*}
\left| V_\alpha(x)-V_\alpha(z)\right|&\leq \left|V_\alpha(x)-V_\alpha(1,\text{SO},0)\right|
+\left|V_\alpha(z)-V_\alpha(1,\text{SO},0)\right|\\
&\leq 
 c_{\text{cm}}(2+\tau_{x,1}+\tau_{z,1}+(\lambda +\mu_1+\mu_2)(\tau_{x,1}+\tau_{z,1}))\\
 &\qquad +
 \left(1-\mathbb{E}_{x}^{\pi_\alpha^{(\infty)}}[e^{-\alpha T_{x,1}}]+1-\mathbb{E}_{z}^{\pi_\alpha^{(\infty)}}[e^{-\alpha T_{z,1}}]
 \right)
  V_\alpha(1,\text{SO},0).
\end{align*}
Combining the above with Step 1 yields
\begin{align*}
|V_\alpha(x)-V_\alpha(z)|&\leq 
c_{\text{cm}}(2+\tau_{x,1}+\tau_{z,1}+(\lambda +\mu_1+\mu_2)(\tau_{x,1}+\tau_{z,1}))\\
&\qquad +
 \left(1-\mathbb{E}_{x}^{\pi_\alpha^{(\infty)}}[e^{-\alpha T_{x,1}}]+1-\mathbb{E}_{z}^{\pi_\alpha^{(\infty)}}[e^{-\alpha T_{z,1}}]
 \right)
\frac{\frac{\mu_1}{\mu_1+\alpha}}{1-\frac{\mu_1}{\mu_1+\alpha}\frac{\mu_2}{\mu_2+\alpha}}. 
\end{align*}
Lastly, note that $ \left(1-\mathbb{E}_{x}^{\pi_\alpha^{(\infty)}}[e^{-\alpha T_{x,1}}]+1-\mathbb{E}_{z}^{\pi_\alpha^{(\infty)}}[e^{-\alpha T_{z,1}}]
 \right)\frac{\frac{\mu_1}{\mu_1+\alpha}}{1-\frac{\mu_1}{\mu_1+\alpha}\frac{\mu_2}{\mu_2+\alpha}}\leq (\tau_{x,1}+\tau_{z,1})\frac{\mu_1\mu_2}{\mu_1+\mu_2}$, which yields 
\begin{align*}
|V_\alpha(x)-V_\alpha(z)|&\leq 
 c_{\text{cm}}\left(2+\left(\lambda +\mu_1+\mu_2+1+\frac{\mu_1\mu_2}{\mu_1+\mu_2}\right)(\tau_{x,1}+\tau_{z,1})\right). 
\end{align*}

\subsection{Proof of Step 5} \label{Proof of Lemma 10.4.3}
To prove this step, we follow to a large extent the approach in \cite[Theorem 3.2 (c)--(d)]{Bhattacharya}. 
Consider the average optimality equality  \eqref{Eq:Bellman_Gen}, this yields for an arbitrary policy $\pi$, 
\begin{align*}
V(X_{k})&\leq c(X_{k};a_{k})+\mathbb{E}_x^\pi[V(X_{k+1})\, |\, X_{k},a_{k}]-g \,\tau(X_{k};a_{k}),\ k=0,1,\ldots,
\end{align*}
which can be equivalently written as
\begin{align*}
c(X_{k};a_{k})&\geq g \,\tau(X_{k};a_{k})+V(X_{k})-\mathbb{E}_x^\pi[V(X_{k+1})\, |\, X_{k},a_{k}],\ k=0,1,\ldots.
\end{align*}
Taking expectations on both sides one gets
\begin{align*}
\mathbb{E}_x^\pi[c(X_{k};a_{k})]&\geq g \,\mathbb{E}_x^\pi[\tau(X_{k};a_{k})]+\mathbb{E}_x^\pi[V(X_{k})]-\mathbb{E}_x^\pi[V(X_{k+1})],\ k=0,1,\ldots.
\end{align*}
Summing both sides of the above equation over $k = 0, 1, . . . , N - 1$, and dividing by $\mathbb{E}_x^\pi\left[\sum_{k=0}^{N-1}\tau(X_{k};a_{k})\right]$
one has
\begin{align}\label{Eq:g_Jxpi}
\frac{\mathbb{E}_x^\pi\left[\sum_{k=0}^{N-1}c(X_{k};a_{k})\right]}{\mathbb{E}_x^\pi\left[\sum_{k=0}^{N-1}\tau(X_{k};a_{k})\right]}
&\geq g +
\frac{V(x)-\mathbb{E}_x^\pi[V(X_{N})]}{\mathbb{E}_x^\pi\left[\sum_{k=0}^{N-1}\tau(X_{k};a_{k})\right]}.
\end{align}
Note that as $N\to\infty$, for $x=(i,j,t)$,
$$
\frac{t+(N-1)\tau}{1+\lambda\tau+\frac{\mu_1\mu_2}{\mu_1+\mu_2}\tau}
\leq \mathbb{E}_x^\pi\left[\sum_{k=0}^{N-1}\tau(X_{k};a_{k})\right].
$$ As such, $\mathbb{E}_x^\pi\left[\sum_{k=0}^{N-1}\tau(X_{k};a_{k})\right]$ is bounded from below for large values of $N$. Taking $\limsup\limits_{N\to\infty}$ on both sides of Equation \eqref{Eq:g_Jxpi} yields $J(x,\pi)\geq g$. 

Since, for $\pi^{*(\infty)}$, the above analysis holds with an equality, it is evident that  $J(x,\pi^{*(\infty)})= g$. 
Note that $g$ is an arbitrary limit point of $\alpha V_\alpha(x)$ as $\alpha\to0^+$. Furthermore, since $\alpha |V_{\alpha}(x)-V_{\alpha}(z)|\to0$ as $\alpha\to0^+$
 for all $x$ and for all $z$, it is now evident that $g=\limsup_{\alpha\to0^+}\alpha V_\alpha(x)$ for all $x\in\mathcal{S}$.

\section{Average cost equalities -- Bellman equations}
\label{sec:BellmanEq}
We proceed writing down the average cost equalities for the model at hand, cf. Proposition \ref{prop:ACOI-full}. More concretely, for $t\in[0,\tau)$, let $V(i,j,t)$ be the value function when the state of the system is $(i,j,t)\in \mathcal{S}$. The average optimality equations read as follows:
\begin{align}
V(2,\text{SC},t) =& 0-g \int_{0}^{t} e^{-(\mu_1+\lambda)x}\dif x+ V(1,\text{SO},0)\int_{t}^{\infty}(\mu_1+\lambda)e^{-(\mu_1+\lambda)x}\dif  x \nonumber\\
&+ \int_{0}^{t} \left(\frac{\mu_1}{\mu_1+\lambda}V(1,\text{SC},t-x)+\frac{\lambda}{\mu_1+\lambda}V(1,\text{USO},t-x)\right) (\mu_1+\lambda)e^{-(\mu_1+\lambda)x}\dif x \nonumber\\
=& e^{-\del{\mu_1+\lambda} t} \del{ \int_{0}^{t}  \del{\mu_1 V(1,\text{SC},y)+\lambda V(1,\text{USO},y) - g}e^{\del{\mu_1+\lambda}y} \dif y + V(1,\text{SO},0)},\label{eq:V2sc}\\
V(1,\text{SC},t) =& c_c + e^{-\del{\mu_2+\lambda} t} \del{ \int_{0}^{t}  \del{\mu_2 V(2,\text{SC},y)+\lambda V(2,\text{USO},y) - g}e^{\del{\mu_2+\lambda}y} \dif y + V(2,\text{SO},0)},\\
V(2,\text{USO},t) =& \min\left\{
c_p^{\text{uso}}+e^{-\del{\mu_2+\lambda} t} \del{ \int_{0}^{t}  \del{\mu_2 V(2,\text{SC},y)+\lambda V(2,\text{USO},y) - g}e^{\del{\mu_2+\lambda}y} \dif y + V(2,\text{SO},0)}
;
\right.\nonumber\\
&\left.\qquad
e^{-\del{\mu_2+\lambda} t} \del{ \int_{0}^{t}  \del{\mu_2 V(2,\text{SC},y)+\lambda V(2,\text{USO},y) - g}e^{\del{\mu_2+\lambda}y} \dif y + V(2,\text{SO},0)}
\right\},  \label{eq:USO2}\\
V(2,\text{SO},0) =& \min\left\{
c_p^{\text{so}}+
e^{-\del{\mu_2+\lambda} \tau} \del{\int_{0}^{\tau} \del{\mu_2 V(2,\text{SC},y)+\lambda V(2,\text{USO},y) - g}e^{\del{\mu_2+\lambda}y} \dif y + V(2,\text{SO},0)}
\right.;\nonumber\\
&\left.\qquad {}
e^{-\del{\mu_2+\lambda} \tau} \del{\int_{0}^{\tau} \del{\mu_2 V(2,\text{SC},y)+\lambda V(2,\text{USO},y) - g}e^{\del{\mu_2+\lambda}y} \dif y + V(2,\text{SO},0)}
\right\},
\label{eq:SO2}\\
V(1,\text{USO},t) =& \min\left\{
c_p^{\text{uso}}+p e^{-\del{\mu_2+\lambda} t} \del{ \int_{0}^{t}  \del{\mu_2 V(2,\text{SC},y)+\lambda V(2,\text{USO},y) - g}e^{\del{\mu_2+\lambda}y} \dif y + V(2,\text{SO},0)} +
\right.\nonumber\\
&\left.\qquad q e^{-\del{\mu_1+\lambda} t} \del{ \int_{0}^{t}  \del{\mu_1 V(1,\text{SC},y)+\lambda V(1,\text{USO},y) - g}e^{\del{\mu_1+\lambda}y} \dif y + V(1,\text{SO},0)} 
\right.;\nonumber\\
&\left.\qquad
e^{-\del{\mu_1+\lambda} t} \del{ \int_{0}^{t}  \del{\mu_1 V(1,\text{SC},y)+\lambda V(1,\text{USO},y) - g}e^{\del{\mu_1+\lambda}y} \dif y + V(1,\text{SO},0)}
\right\}, \label{eq:USO1}\\
V(1,\text{SO},0) =& \min\left\{
c_p^{\text{so}}+ p
e^{-\del{\mu_2+\lambda} \tau} \del{\int_{0}^{\tau} \del{\mu_2 V(2,\text{SC},y)+\lambda V(2,\text{USO},y) - g}e^{\del{\mu_2+\lambda}y} \dif y + V(2,\text{SO},0)}
\right. + \nonumber\\
&\left.\qquad  q
e^{-\del{\mu_1+\lambda} \tau} \del{\int_{0}^{\tau} \del{\mu_1 V(1,\text{SC},y)+\lambda V(1,\text{USO},y) - g}e^{\del{\mu_1+\lambda}y} \dif y + V(1,\text{SO},0)}
\right.;\nonumber\\
&\left.\qquad {}
e^{-\del{\mu_1+\lambda} \tau} \del{\int_{0}^{\tau} \del{\mu_1 V(1,\text{SC},y)+\lambda V(1,\text{USO},y) - g}e^{\del{\mu_1+\lambda}y} \dif y + V(1,\text{SO},0)}
\right\}.
\label{eq:SO1}
\end{align}

In this paragraph, we explain in detail how Equation \eqref{eq:V2sc} is obtained. State $(2,\text{SC},t)$ is associated with only the  decision ``do nothing''. Therefore, there is no minimum operator appearing on the right hand side of Equation \eqref{eq:V2sc} and the corresponding cost is  equal to zero. For the other terms appearing on the right hand side of Equation \eqref{eq:V2sc}, it suffices to note that there are three possible evolutions in terms of the state of the system: either an SO or an SC or  a USO, where the time till the next SO is equal to $t$, while the times till the SC and USO are exponentially distributed with rates $\mu_1$ and $\lambda$, respectively.   In particular, the expected sojourn time of the semi-Markov decision process in state $(2,\text{SC},t)$ can be calculated as the expectation of the minimum of a deterministic time $t$ and two exponentially distributed times, which can be easily verified to be equal to $\int_{0}^{t} e^{-(\mu_1+\lambda)x}\dif x$.  
The set of optimality equations for the remaining states can be obtained using very similar arguments. Note that in Equations \eqref{eq:USO2}--\eqref{eq:SO1}, inside the minimum, the left term corresponds to the action `perform preventive maintenance', while the right terms  correspond to the action `do nothing'. 

We observe that, since $c_{\text{pm}}^{\text{so}}, c_{\text{pm}}^{\text{uso}}>0$ and $p+q=1$, Equations \eqref{eq:USO2} and \eqref{eq:SO2} yield that it is never optimal to perform preventive maintenance in state $2$ in both USOs and SOs, respectively.
 
 We define the following auxiliary functions, for $t\in[0,\tau)$,
\begin{equation}\label{eq:fis}
F_i(t) = e^{-\del{\mu_{i}+\lambda} t} \del{ \int_{0}^{t}  \del{\mu_i V(i,\text{SC},y)+\lambda V(i,\text{USO},y) - g}e^{\del{\mu_i+\lambda}y} \dif y
+ V(i,\text{SO},0)}, \ i \in \{1,2\},
\end{equation}
so that Equations \eqref{eq:V2sc}-\eqref{eq:SO1} reduce to
\begin{align}
V(1,\text{SC},t) =&\ c_{\text{cm}} + F_2(t),\quad V(2,\text{SC},t) = F_1(t), \quad t\in[0,\tau), \label{V2SCt}\\
V(i,\text{USO},t) =& \min\cbr{c_{\text{pm}}^{\text{uso}}+p  F_2(t) + q  F_i(t),F_i(t)}, \quad i\in\{1,2\}, t\in[0,\tau), \label{V1USOt}\\
V(i,\text{SO},0) =& \min\cbr{c_{\text{pm}}^{\text{so}}+p  F_2(\tau) + q   F_i(\tau) , F_i(\tau)}, \quad i\in\{1,2\}, t\in[0,\tau). \label{V1SOt}
\end{align}

\section{Proof of Theorem \ref{thm:opt1}}
\label{proofTheorem1}
\noindent \begin{proof} 
We distinguish four cases, each corresponding to a different set of actions. {Case (i):} $F_1(\tau)-F_2(\tau)\leq \frac{c_{\text{pm}}^{\text{so}}}{p}$; {Case (ii):} $\frac{c_{\text{pm}}^{\text{so}}}{p}< F_1(\tau)-F_2(\tau)< \frac{c_{\text{pm}}^{\text{uso}}}{p}$; {Case (iii):} $\frac{c_{\text{pm}}^{\text{uso}}}{p} < F_1(\tau)-F_2(\tau)$;  {Case (iv):} $F_1(\tau)-F_2(\tau)= \frac{c_{\text{pm}}^{\text{uso}}}{p}$.

\begin{description}
\item[Case (i):]  In state $(2,\text{SO},0)$, it is optimal to not perform preventive maintenance. Furthermore, from the assumption 
\begin{align}
F_1(\tau)-F_2(\tau) &\leq \frac{c_{\text{pm}}^{\text{so}}}{p} \label{AssumptionCondition}
\end{align} 
and Equation  \eqref{V1SOt} for $i=1$, it becomes evident that it is also optimal to not perform preventive maintenance in state $(1,\text{SO},0)$. Since the function $F_1(t)-F_2(t)$ is, by definition, a continuous function in $t\in[0,\tau]$, $c_{\text{pm}}^{\text{so}}<c_{\text{pm}}^{\text{uso}}$, and taking into account Equation \eqref{AssumptionCondition}, it is evident that there exists an $\varepsilon>0$, such that
\begin{align}\label{asumption2}
F_1(t)-F_2(t)  \leq  \frac{c_{\text{pm}}^{\text{uso}}}{p}, \text{ for all }t\in(\tau-\epsilon,\tau].
\end{align}
Equation \eqref{asumption2}, in light of Equation \eqref{V1USOt}, implies that if the elapsed time from the SO is less than $\epsilon$, then, under the assumption it is optimal to not perform preventive maintenance on the system in state $(i,\text{SO},0)$, it is also not optimal to perform preventive maintenance at a USO. In this case, for $t\in(\tau-\epsilon,\tau]$, we have that $V(1,\text{USO},t)=F_1(t)$ and $V(2,\text{USO},t) = F_2(t)$, cf. Equation \eqref{V1USOt}. Taking the derivative with respect to $t$ in Equation \eqref{eq:fis} and substituting the above obtained values for $V(1,\text{USO},t)$ and $V(2,\text{USO},t)$ yields
\begin{align*}
F_1'(t)-F_2'(t)&=-(\mu_1+\lambda)F_1(t)+\mu_1 V(1,\text{SC},t)+\lambda V(1,\text{USO},t)\nonumber\\
&\quad +(\mu_2+\lambda)F_2(t)-\mu_2 V(2,\text{SC},t)-\lambda V(2,\text{USO},t)\nonumber\\
&= -(\mu_1+\mu_2)F_1(t) + (\mu_1+\mu_2)F_2(t) + \mu_1 c_{\text{cm}},\ t\in(\tau-\epsilon,\tau].
\end{align*} 
The solution to the above differential equation reads
\begin{align}
F_1(t)-F_2(t) =& \frac{\mu_1c_{\text{cm}}}{\mu_1+\mu_2} + \left(F_1(\tau)-F_2(\tau) - \frac{\mu_1 c_{\text{cm}}}{\mu_1 + \mu_2} \right) e^{(\mu_1+\mu_2)(\tau-t)},\ t\in(\tau-\epsilon,\tau]. 
\label{solution:dif3}
\end{align}
If $F_1(\tau)-F_2(\tau) - \frac{\mu_1 c_{\text{cm}}}{\mu_1 + \mu_2}\neq 0$, it follows that, for $t\in(\tau-\epsilon,\tau]$, the function $F_1(t)-F_2(t)$ is strictly monotone. In this case, by extending the previous analysis to the entire domain, which would maintain the strict monotonicity of the function $F_1(t)-F_2(t)$, we would reach a contradiction: For $t=0$, Equation \eqref{eq:fis} yields $F_1(0)=V(1,\text{SO},0)\stackrel{\eqref{V1SOt}}{=}F_1(\tau)$ and $F_2(0)=V(2,\text{SO},0)\stackrel{\eqref{V1SOt}}{=}F_2(\tau)$, where $\stackrel{(\cdot)}{=}$ denotes that the equality follows from Equation ($\cdot$). We thus have
\begin{align}
F_1(0)-F_2(0) = F_1(\tau) -F_2(\tau). \label{asumption1}
\end{align}
Due to \eqref{asumption1}, it is evident that $F_1(\tau)-F_2(\tau) - \frac{\mu_1 c_{\text{cm}}}{\mu_1 + \mu_2}= 0$, thus the function $F_1(t)-F_2(t)$ satisfying Equation \eqref{solution:dif3} is a constant function, i.e. 
\begin{align}\label{conditionForConstant}
F_1(t)-F_2(t) = \frac{\mu_1c_{\text{cm}}}{\mu_1+\mu_2},\ t\in(0,\tau].
\end{align}
Combining Equation \eqref{AssumptionCondition} with Equation \eqref{conditionForConstant} leads to the optimality condition for {Case (i)}. That is, if $$\mu_1c_{\text{cm}}\leq (\mu_1+\mu_2)\frac{c_{\text{pm}}^{\text{so}}}{p},$$ we do not perform preventive maintenance at any opportunity. 
\item[Case (ii):]  In state $(2,\text{SO},0)$, similarly to the previous case, it is optimal to not perform preventive maintenance. However, from the assumption 
\begin{align}
\frac{c_{\text{pm}}^{\text{so}}}{p} < F_1(\tau)-F_2(\tau) < \frac{c_{\text{pm}}^{\text{uso}}}{p} \label{AssumptionCoditionCaseii}
\end{align} 
and Equation  \eqref{V1SOt} for $i=1$, it becomes evident that it is optimal to perform preventive maintenance on the system in state $(1,\text{SO},0)$. Similarly to Case (i), as $F_1(\tau)-F_2(\tau) < \frac{c_{\text{pm}}^{\text{uso}}}{p}$, there exists an  $\varepsilon>0$ for which \eqref{asumption2} holds.

Repeating the same analysis as in {Case (i)}, we can show that, for $t\in[0,\tau]$, the function $F_1(t)-F_2(t)$ satisfies Equation \eqref{solution:dif3} and that it is a non-decreasing function if 
\begin{align}
F_1(\tau)-F_2(\tau) - \frac{\mu_1c_{\text{cm}}}{\mu_1+\mu_2}<0 \label{conditionIncreasingness}.
\end{align}
However, for $t=0$, we now have that
\begin{align}
F_1(0) - F_2(0) &\stackrel{\eqref{eq:fis}}{=} V(1,SO,0) - V(2,SO,0) \nonumber \\ 
 &= c_{\text{pm}}^{\text{so}} + p F_2(\tau) + (1-p) F_1(\tau) -F_2(\tau) \nonumber \\ 
&=c_{\text{pm}}^{\text{so}} + (1-p)(F_1(\tau) - F_2(\tau)). \label{equalityCaseii}
\end{align}
Combining \eqref{equalityCaseii} with \eqref{solution:dif3} (on the domain $t\in[0,\tau]$) yields
\begin{align}
F_1(\tau)-F_2(\tau) = \frac{\left(1-e^{(\mu_1+\mu_2)\tau} \right) \frac{\mu_1 c_{\text{cm}}}{\mu_1 + \mu_2}  - c_{\text{pm}}^{\text{so}}}{1-p- e^{(\mu_1+\mu_2)\tau} } .\label{conditionCaseii}
\end{align}
Combining Equations \eqref{AssumptionCoditionCaseii},  \eqref{conditionIncreasingness}, and  \eqref{conditionCaseii} leads to the optimality condition for {Case (ii)}. That is, if $$(\mu_1+\mu_2)\frac{c_{\text{pm}}^{\text{so}}}{p} < \mu_1 c_{\text{cm}} < \left( \frac{c_{\text{pm}}^{\text{uso}}}{p} - \frac{c_{\text{pm}}^{\text{uso}} - c_{\text{pm}}^{\text{so}}}{e^{(\mu_1 +\mu_2)\tau}-1} \right) (\mu_1+\mu_2),$$ we perform preventive maintenance on the system if it is in state 1 at an SO but not at a USO. 

\item[Case (iii):]   In state $(2,\text{SO},0)$, similarly to the previous case, it is optimal to not perform preventive maintenance. However, from the assumption 
\begin{align}
F_1(\tau)-F_2(\tau) > \frac{c_{\text{pm}}^{\text{uso}}}{p}> \frac{c_{\text{pm}}^{\text{so}}}{p} \label{AssumptionCoditionCaseiii}
\end{align}
and Equation  \eqref{V1SOt} for $i=1$, it becomes evident that it is optimal to perform preventive maintenance on the system in state $(1,\text{SO},0)$. Along the lines of the previous cases, as $F_1(\tau)-F_2(\tau) > \frac{c_{\text{pm}}^{\text{uso}}}{p}$, there exists an  $\varepsilon>0$ for which 
\begin{align}\label{asumption3}
F_1(t)-F_2(t)  \geq  \frac{c_{\text{pm}}^{\text{uso}}}{p}, \text{ for all }t\in(\tau-\epsilon,\tau].
\end{align}
In this case, for $t\in(\tau-\epsilon,\tau]$, we have that $V(1,\text{USO},t)=c_{\text{pm}}^{\text{uso}} + pF_2(t) + (1-p) F_1(t)$ and $V(2,\text{USO},t) = F_2(t)$ (cf. Equation \eqref{V1USOt}). Taking a derivative with respect to $t$ in \eqref{eq:fis} and substituting the above obtained values for $V(1,\text{USO},t)$ and $V(2,\text{USO},t)$ yields
\begin{align}
F_1'(t)-F_2'(t)=&-(\mu_1+\lambda)F_1(t)+\mu_1 V(1,\text{SC},t)+\lambda V(1,\text{USO},t)\nonumber\\
& +(\mu_2+\lambda)F_2(t)-\mu_2 V(2,\text{SC},t)-\lambda V(2,\text{USO},t) \nonumber \\ 
=&  -(\mu_1+\lambda)F_1(t) + \mu_1 (c_{\text{cm}} + F_2(t)) + \lambda (c_{\text{pm}}^{\text{uso}} + pF_2(t) + (1-p) F_1(t))\nonumber\\
&+(\mu_2+\lambda)F_2(t) - \mu_2 F_1(t) - \lambda F_2(t) \nonumber \\
=& -(\mu_1+\mu_2 + \lambda p) (F_1(t)-F_2(t)) + \mu_1 c_{\text{cm}} + \lambda c_{\text{pm}}^{\text{uso}},\ t \in (\tau-\varepsilon,\tau].\label{eq:difeqscaseiii}
\end{align} 
The solution to the above differential equation reads 
\begin{align}
F_1(t)-F_2(t) =& \frac{\mu_1c_{\text{cm}} + \lambda c_{\text{pm}}^{\text{uso} }}{\mu_1+\mu_2 + \lambda p } + \left(F_1(\tau)-F_2(\tau) -\frac{\mu_1c_{\text{cm}} + \lambda c_{\text{pm}}^{\text{uso} }}{\mu_1+\mu_2 + \lambda p } \right) e^{(\mu_1+\mu_2 + \lambda p )(\tau-t)},\ t \in (\tau-\varepsilon,\tau].
\label{solution:dif8}
\end{align}
Note that, if we assume that $F_1(\tau)-F_2(\tau) -\frac{\mu_1c_{\text{cm}} + \lambda c_{\text{pm}}^{\text{uso} }}{\mu_1+\mu_2 + \lambda p }\geq 0$, then we can extend \eqref{solution:dif8} on the entire domain $t\in[0,\tau]$, and the function $F_1(t)-F_2(t)$ is non-increasing. However, this is unfeasible. Note that, for $t=0$, Equation \eqref{eq:fis} yields $F_1(0)=V(1,\text{SO},0)\stackrel{\eqref{V1SOt}}{=}c_{\text{pm}}^{\text{so} }+pF_2(\tau)+qF_1(\tau)$ and $F_2(0)=V(2,\text{SO},0)\stackrel{\eqref{V1SOt}}{=}F_2(\tau)$, thus
\begin{align}\label{DiffAtZero_caseiii}
F_1(0)-F_2(0) = c_{\text{pm}}^{\text{so} }+q(F_1(\tau) -F_2(\tau))\geq F_1(\tau) -F_2(\tau) \ \Leftrightarrow\  F_1(\tau)-F_2(\tau) \leq \frac{c_{\text{pm}}^{\text{so}}}{p},
\end{align}
which contradicts Assumption \eqref{AssumptionCoditionCaseiii}. Due to this contradiction, it is necessary to assume that $F_1(\tau)-F_2(\tau) -\frac{\mu_1c_{\text{cm}} + \lambda c_{\text{pm}}^{\text{uso} }}{\mu_1+\mu_2 + \lambda p }< 0$. This implies that the function $F_1(t)-F_2(t)$ is non-decreasing and we can extend \eqref{solution:dif8} on the  domain $t\in[t^*,\tau]$, where $t^*$  is such that $F_1(t^*)-F_2(t^*)= \frac{c_{\text{pm}}^{\text{uso}}}{p}$, i.e.  
\begin{align}
F_1(t)-F_2(t) =& \frac{\mu_1c_{\text{cm}} + \lambda c_{\text{pm}}^{\text{uso} }}{\mu_1+\mu_2 + \lambda p } + \left(F_1(\tau)-F_2(\tau) -\frac{\mu_1c_{\text{cm}} + \lambda c_{\text{pm}}^{\text{uso} }}{\mu_1+\mu_2 + \lambda p } \right) e^{(\mu_1+\mu_2 + \lambda p )(\tau-t)},\ t \in [t^*,\tau].
\label{solution:dif8b}
\end{align}
See Figure \ref{fig:case3} for a visualization of $F_1(t)-F_2(t) $.

\begin{figure}[!ht]
\centering
  \includegraphics[width=0.7\linewidth]{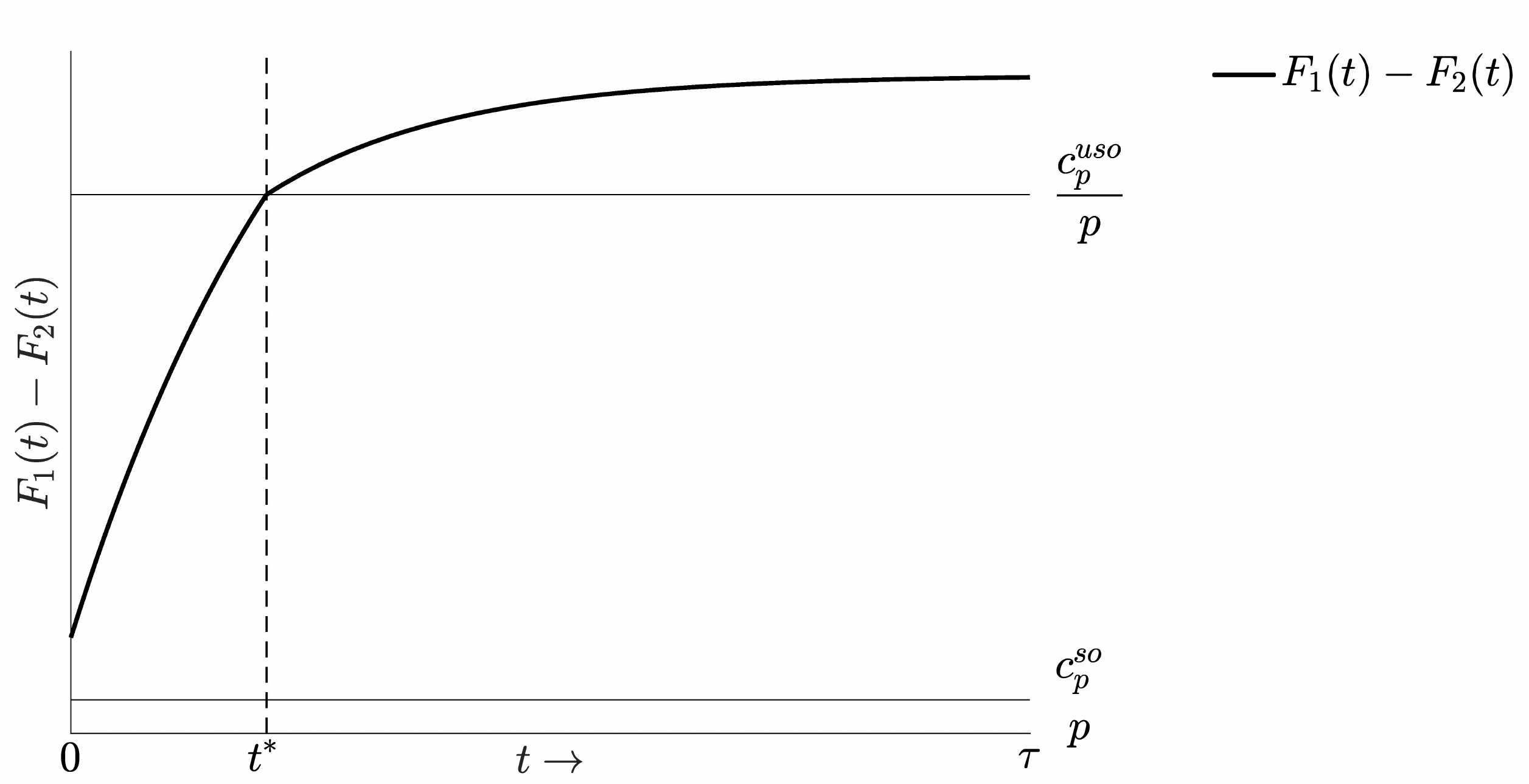}
  \caption{The case of maintaining a system at scheduled and unscheduled opportunities in $t\in[{t^*},\tau)$.}
  \label{fig:case3}
\end{figure}

From the definition of $t^*$, and the continuity of $F_1(t)-F_2(t) $, it follows that there exists an $\varepsilon>0$, such that
\begin{align}\label{asumption_2}
F_1(t)-F_2(t)  \leq  \frac{c_{\text{pm}}^{\text{uso}}}{p}, \text{ for all }t\in(t^*-\epsilon,t^*].
\end{align}
Note that if one were to assume that $F_1(t)-F_2(t)  \geq  \frac{c_{\text{pm}}^{\text{uso}}}{p}$, for all $t\in(t^*-\epsilon,t^*]$, then due to Equation \eqref{DiffAtZero_caseiii}, this would again contradict Assumption \eqref{AssumptionCoditionCaseiii}.

Now repeating the analysis performed in Case (i), albeit in a different domain, we can show that, for $t \in [0,t^*]$, 
\begin{align}
F_1(t)-F_2(t) =& \frac{\mu_1c_{\text{cm}}}{\mu_1+\mu_2} + \left(F_1(t^*)-F_2(t^*) - \frac{\mu_1 c_{\text{cm}}}{\mu_1 + \mu_2} \right) e^{(\mu_1+\mu_2)(t^*-t)},\ t\in [0,t^*].
\label{solution:dif7}
\end{align}
From the continuity of $F_1(t)-F_2(t)$ at $t=t^*$, we obtain
\begin{equation} 
\frac{c_{\text{pm}}^{\text{uso}}}{p} = \frac{\mu_1c_{\text{cm}} + \lambda c_{\text{pm}}^{\text{uso} }}{\mu_1+\mu_2 + \lambda p } + \left(F_1(\tau)-F_2(\tau) -\frac{\mu_1c_{\text{cm}} + \lambda c_{\text{pm}}^{\text{uso} }}{\mu_1+\mu_2 + \lambda p } \right) e^{(\mu_1+\mu_2 + \lambda p )(\tau-t^*)}. \label{equation1} 
\end{equation}
Furthermore, setting $t=0$ in Equation \eqref{solution:dif7} and using \eqref{DiffAtZero_caseiii} yields
\begin{equation}
c_{\text{pm}}^{\text{so}} + (1-p)(F_1(\tau) - F_2(\tau)) =  \frac{\mu_1c_{\text{cm}}}{\mu_1+\mu_2} + \left( \frac{c_{\text{pm}}^{\text{uso}}}{p} - \frac{\mu_1 c_{\text{cm}}}{\mu_1 + \mu_2} \right) e^{(\mu_1+\mu_2)t^*}. \label{equation2}
\end{equation}
Note that Equations \eqref{equation1} and \eqref{equation2} form a system of two equations with two unknowns, which produce a unique solution for  $t^*$, cf.  Equation \eqref{thm1eq1}.  Since $F_1(t) - F_2(t)$ is a continuous function throughout $[0,\tau)$, we can directly use the optimality condition for {Case (ii)} to state the optimality condition for this case. That is, if $$\mu_1 c_{\text{cm}} > \left( \frac{c_{\text{pm}}^{\text{uso}}}{p} - \frac{c_{\text{pm}}^{\text{uso}} - c_{\text{pm}}^{\text{so}}}{e^{(\mu_1 +\mu_2)\tau}-1} \right) (\mu_1+\mu_2),$$ we perform preventive maintenance on the system if it is in state 1 at an SO and at a USO for which the residual time until the next SO is in the interval $[\hat{t},\tau)$, with $\hat{t} = \min\{\tau,\max\{0,t^*\}\}$.
\item[Case (iv):]  This case follows evidently by performing again the steps of Case (iii) for  $t^*=\tau$.\end{description}
\end{proof}

\section{Proof of Theorem \ref{prop:unequal}}
\label{proofTheorem2}
\noindent \begin{proof}
Similarly to the proof of Theorem \ref{thm:opt1}, we need to make certain assumptions here regarding the actions at the given opportunities. In particular, we distinguish four cases, each corresponding to a different set of actions:
{Case (i):} $F_1(\tau)-F_2(\tau)\leq \frac{c_{\text{pm}}^{\text{uso}}}{p}$; {Case (ii):} $\frac{c_{\text{pm}}^{\text{uso}}}{p}< F_1(\tau)-F_2(\tau)< \frac{c_{\text{pm}}^{\text{so}}}{p}$; {Case (iii):} $\frac{c_{\text{pm}}^{\text{so}}}{p} < F_1(\tau)-F_2(\tau)$;  {Case (iv):} $F_1(\tau)-F_2(\tau)= \frac{c_{\text{pm}}^{\text{so}}}{p}$.
The proof of this theorem is similar in structure to the proof of Theorem \ref{thm:opt1} and for this reason it is omitted.
\end{proof}

\section{Proof of Theorem \ref{them:costsDeferring}}
\label{proofTheorem4}
\noindent \begin{proof} 
We first focus on the derivation of the cycle length appearing in the denominator of Equation \eqref{Eq:longrunrateofcostdef}. Observe that the length of a renewal cycle consists of the time the system spends in state 2 plus the time from the state-change $2 \to 1$ until the first successful maintenance. To this purpose, let $C\!L$ denote the length of the part of the renewal cycle that the underlying stochastic process spends in state 1. Furthermore, let $Y$ denote the random amount of time from a state-change $2\to 1$ to the first SO, we then have for the probability density function of $Y$  that $$f_Y(y)= f_{T_{\mu_2}}(\tau-y|T_{\mu_2}<\tau),$$ which leads to Equation \eqref{residualTime_thm}. 
Conditioning on $Y$, a renewal cycle can either end before the first SO, or at the first SO, or after the first SO. Hence, we have that the expected cycle length is equal to
\begin{align}
\frac{1}{\mu_2} + \E\left[C\!L\,  \i_{\{C\!L\, \leq Y\}}\right] + \E\left[C\!L\,  \i_{\{C\!L\, >Y\}}\right].\label{ECL}
\end{align}

We first focus on deriving expressions for the individual expectations in Equation \eqref{ECL}. Note that the first successful maintenance can be of type $j \in \{\text{SC},\text{SO},\text{USO}\}$ and may occur in the interval $[t, t']$, this is in short denoted by $j\,[t, t']$. Thus, rewriting the first part in Equation \eqref{ECL} results in (cf. Equation \eqref{eclFirstPart_thm})
\begin{align}
\E\left[C\!L\  \i_{\{C\!L\,  \leq Y\}} \right]  &= \E\left[C\!L  \i_{\{\text{USO}{[\tau- Y,\tau-\tilde{t}]}\}}\right] + \E\left[C\!L \i_{\{\text{SO}{[\tau-Y,\tau]}\}}\right] + \E\left[C\!L  \i_{\{\text{CM}{[\tau- Y,\tau]}\}}\right]. \label{eclFirstPart}
\end{align}
For the second expectation in Equation \eqref{ECL}, observe that the length of this part can be further decomposed: first the system goes through a geometric number of intervals of length $\tau$ in which no successful maintenance activity takes place, after which the system enters the last interval in which the successful maintenance activity takes place. To this end, let $p_u$ be the probability that there is no successful maintenance activity in an arbitrary interval between two SOs (including the SO with which this interval ends) after the state change $2\rightarrow 1$, i.e. 
$$p_u :=(1-p) \P[T_{\mu_1} > \tau, T_{\lambda p} > \tau-\tilde{t}]  
 = (1-p) e^{-\mu_1\tau - \lambda p (\tau - \tilde{t})} = (1-p)  \P\left[\text{SO}{[0,\tau]} \right].$$ 
We then have, from the memoryless property of $T_{\mu_1}$ and $T_{\lambda p}$,
\begin{align}
\E\left[C\!L\,  \i_{\{C\!L\, >Y\}}\right] &= (1-p)\P [ \text{SO}_{[\tau-Y,\tau]} ] \left( \E\left[Y\right] + \sum_{k=0}^{\infty} p_u^k(1-p_u) \left( \E\left[C\!L\,  \i_{\{Y+k\tau \leq C\!L\,  \leq Y + (k+1) \tau\}} \right] \right) \right) \nonumber \\
&= (1-p)\P [ \text{SO}_{[\tau-Y,\tau]} ]\left( \E\left[Y\right]  + \sum_{k=0}^{\infty} p_u^k (1-p_u) \left( k\tau +  \E\left[C\!L'\,  \i_{\{C\!L'\leq Y\}} \,|\, Y=\tau \right]    \right) \right)\nonumber \\
&= (1-p)\P [ \text{SO}_{[\tau-Y,\tau]} ] \left(  \E\left[Y\right]  + \frac{\tau p_u}{1-p_u} +  \E\left[C\!L'\,  \i_{\{C\!L'\leq Y\}} \,|\, Y=\tau \right]  \right), \label{EXP2}
\end{align}
where $\E\left[C\!L'\,  \i_{\{CL'\leq Y\}} \,|\, Y=\tau \right]$ is the expected length of the last part of the renewal cycle, i.e. the interval in which the successful maintenance activity takes place. Analogously to Equation \eqref{eclFirstPart}, we can further decompose  $\E\left[C\!L'\,  \i_{\{C\!L'\leq Y\}} \,|\, Y=\tau \right]$ by conditioning on the type of the successful maintenance activity with which it ends.

We are now left with defining the events that lead to $j\,[t, t']$, such that we can calculate the expectations in Equations \eqref{USOCE_thm}-\eqref{CMCE_thm}. With respect to $\text{SO}[\tau-y, \tau]$, observe that if $y\in[0,\tilde{t})$,  $\i_{\{ \text{SO}{[\tau-y,\tau]}\}}$ is equal to 1 if $T_{\mu_1}>y$, since we do not take any USOs. If  $y\in[\tilde{t},\tau]$, no successful USOs in $[\tau-y,\tau-\tilde{t}]$ can occur and $T_{\mu_1}>y$ for $\i_{\{ \text{SO}{[\tau-y,\tau]}\}}$ to be equal to 1. Combining this leads to Equation \eqref{ISO_thm}. Equations \eqref{IUSO_thm} and \eqref{ICM_thm} are obtained along similar lines. Note that all expectations and probabilities only involve exponentially distributed random variables. Consequently, closed-form expressions can be obtained using straightforward calculus. However, for the sake of brevity, we have chosen to provide one closed-form expression and omit the rest (which can be obtained analogously).  For Equation \eqref{USOCE_thm}, we have for $y>\tilde{t}$:
\begin{align}
\E\left[CL \i_{\{\text{USO}{[\tau- y,\tau-\tilde{t}]}\}}\right] &= \E\left[ T_{\lambda p} \i_{\{\text{USO}{[\tau- y,\tau-\tilde{t}]}\}}   \right] \nonumber\\
&= \E\left[ T_{\lambda p} \i_{\{T_{\lambda p} \leq \min\left\{y-\tilde{t},T_{\mu_1}\right\}\}}    \right]\nonumber\\  
&=  \int_{0}^{y-\tilde{t}} \E\left[ T_{\lambda p} \i_{\{T_{\lambda p} \leq x\}} \right] \mu_1 e^{-\mu_1 x} \dif x  +   \int_{y-\tilde{t}}^{\infty} \E\left[ T_{\lambda p} \i_{\{T_{\lambda p} \leq y - \tilde{t}\}}  \right]  \mu_1 e^{-\mu_1 x} \dif x \nonumber \\
&= \int_{0}^{y-\tilde{t}} \int_{0}^{x} z \lambda p e^{-\lambda p z} \dif z \mu_1 e^{-\mu_1 x} \dif x +   \int_{y-\tilde{t}}^{\infty}  \int_{0}^{y-\tilde{t}} z \lambda p e^{-\lambda p z} \dif z \mu_1 e^{-\mu_1 x} \dif x \nonumber \\
&= \frac{\lambda p }{\lambda p + \mu_1} \left( \frac{1- e^{-(\lambda p + \mu_1)(y-\tilde{t}) }(1+(\lambda p + \mu_1)(y-\tilde{t}))}{\lambda p + \mu_1} \right).  \nonumber
\end{align}

We now focus on the numerator of Equation \eqref{Eq:longrunrateofcostdef}, i.e. the expected cycle cost. To that end, let $C\!C$ be the cost incurred in a renewal cycle. The analysis for the expected cycle cost, $\E\left[C\!C\right]$, is similar to the analysis of the expected cycle length. Again, we decompose the length of a renewal cycle into three parts (i.e. the interval after the state change until the first SO, the geometric number of intervals of length $\tau$ in which no successful maintenance activity takes place, and the last interval in which the successful maintenance activity takes place), and compute the conditional expected cycle costs in these parts (mainly consisting of costs incurred at unsuccessful maintenance activities). Thus,
\begin{align} 
\E\left[C\!C\right] &= \E\left[C\!C\i_{\{C\!L\,  \leq Y\}}\right]+ \E\left[C\!C\i_{\{C\!L\,  > Y\}} \right] .\label{ECCfirst}
\end{align}
We first focus on the first part in Equation \eqref{ECCfirst} and condition further on the type of activity, which yields 
\begin{align}
 \E\left[C\!C\, \i_{\{C\!L\,  \leq Y\}}\right] &= \E\left[C\!C\,\i_{\{\text{USO}{[\tau-Y,\tau-\tilde{t}]}\}}\right] + \E\left[ C\!C\, \i_{\{ \text{SO}{[\tau-Y,\tau]}\}}\right] + \E\left[C\!C\,\i_{\{\text{C\!M}{[\tau-Y,\tau]}\}}\right].\nonumber
\end{align}
Analogous to the expected cycle lengh, the expected cost incurred during the geometric number of intervals of length $\tau$, in which no successful maintenance activity takes place, is equal to  
$$\sum_{k=0}^{\infty} p_u^k (1-p_u)k  \left(\lambda (1-p)(\tau -\tilde{t})c_{\text{pm}}^{\text{uso}} +c_{\text{pm}}^{\text{so}} \right) = \frac{(\lambda (1-p)(\tau -\tilde{t})c_{\text{pm}}^{\text{uso}} +c_{\text{pm}}^{\text{so}} ) p_u }{1-p_u }.$$ 
Observe that the expected cost in the interval in which the successful maintenance activity takes place is composed of two parts regardless of the type of activity, i.e. the cost of the successful maintenance activity itself and the cost related to the unsuccessful USOs up to the successful maintenance activity (see Equations \eqref{costUSO} - \eqref{costCM}).  Again, all expectations and probabilities related to the costs only involve exponentially distributed random variables, and again, for the sake of brevity, we have chosen to provide one closed-form expression and omit the rest (which can be obtained analogously). For Equation \eqref{costSO}, we have
\begin{align}
\E\left[CC\,\i_{\{\text{SO}{[\tau-y,\tau]}\}}\right] &= \Big(c_{\text{pm}}^{\text{so}} + \lambda(1-p) c_{\text{pm}}^{\text{uso}}  \max\left\{y-\tilde{t},0\right\} \Big)\P\left[\text{SO}{[\tau-y,\tau]}\,\right], \nonumber
\end{align}
with
\begin{align}
\P \left[  \text{SO}{[\tau-y,\tau]} \right] &= \P \left[ T_{\mu_1} > y \right]\i_{\{ y < \tilde{t}\}} + \P \left[ T_{\lambda p} > y-\tilde{t} , T_{\mu_1} > y \right]\i_{\{y\geq\tilde{t}\}} \nonumber\\
&=  e^{-\mu_1 y }\,\i_{\{y < \tilde{t}\}} +  e^{-(\mu_1 y + \lambda p (y-\tilde{t}))}\,\i_{\{y\geq \tilde{t}\}},  \nonumber
\end{align}
which completes the proof.
\end{proof}
\end{document}